\documentclass{article}

\usepackage{maa-monthly}

\usepackage{graphicx}
\usepackage{epstopdf}
\usepackage{color}
\usepackage{adjustbox}


\newtheorem{theorem}{\hspace{\parindent}
T{\scriptsize HEOREM}}[section]
\newtheorem{proposition}[theorem]
{\hspace{\parindent }P{\scriptsize ROPOSITION}}
\newtheorem{conjecture}[theorem]
{\hspace{\parindent }C{\scriptsize ONJECTURE}}

\def\n{\noindent}

\title{Unit Fractions in Norm-Euclidean Rings of Integers}
\author{Kyle Bradford and Eugen J.  Ionascu}

\begin{document}
\maketitle

\section{Introduction} \label{sec1}

The Erd\H{o}s-Straus conjecture became a topic of interest in the late 1940s and early 1950s \cite{pe1, ob, ro} and has since been the topic of many papers.  Richard Guy has a wonderful account of the progress on this work (see \cite{rg}).  In short, the conjecture asks to show that for every natural number $n \geq 2$, the Diophantine equation

\begin{equation}\label{esconj}
\frac{4}{n}=\frac{1}{a}+\frac{1}{b}+\frac{1}{c},
\end{equation}

\ 

\noindent has a solution $a$, $b$, $c\in \mathbb N$.  There have been many partial results about the nature of solutions to this equation.  Some people have used algebraic geometry techniques to give structure this problem (see \cite{cs}).  Many attempts use analytic number theory techniques to find mean and asymptotic results (see \cite{ec, et, dl, san1, san2, v, wweb1, yang}).  Some people have tried to look at decompositions of related fractions, such as $k \slash n$ for $k \geq 2$ (see \cite{aaa, ec, gm, rav, wweb2, wweb3}).  Some have tried computational methods (see \cite{sa}).  Many people have organized primes $p$  into two classes based off of the decompositions of $4 \slash p$  in hopes to find a pattern within each class (see \cite{bh, et, san1, san2}).  A well known method was developed by Mordell \cite{mo}  and many attempts use the techniques in that paper (see \cite{iw, san3, t, y}).  Some people attempt to find patterns in the field of fractions of the polynomial ring $\mathbb{Z}[x]$ instead of $\mathbb{Q}$ (see \cite{s}).  The conjecture that we make gives light to a completely different approach that one can take to find results about problems similar to Erd\H{o}s-Straus.

To begin we note that if $n \in \mathbb{Z}$  such that $|n| \geq 2$, then (\ref{esconj})  has a solution $a$, $b$, $c\in \mathbb Z$.  The following decompositions render this problem trivial:

\begin{equation} \label{esint}
\frac{4}{n} =
\begin{cases}
1 \slash k + 1 \slash k & \text{if $n = 2k$, $k \in \mathbb{Z}$  with $k \neq 0$} \\
1 \slash (k+1)+1 \slash ((k+1)(4k+1)) & \text{if $n=4k+3$, $k\in \mathbb{Z}$ with $k \neq -1$} \\
1 \slash k- 1 \slash (k(4k+1)) & \text{if $n=4k+1$, $k \in \mathbb{Z}$  with $k \neq 0$}.
\end{cases}
\end{equation}

\ 

Notice that for (\ref{esint}) we relaxed the restriction of having the values $a,b,c \in \mathbb{N}$, which is a specific cone within the integers.  With the Erd\H{o}s-Straus conjecture being unsolved for decades and this integer version easily solved as in (\ref{esint}), this illuminates a stark contrast in difficulty.  In this paper we find solutions to (\ref{esconj}) for a familiar class of algebraic number fields. We will highlight the Gaussian integers from this class to create our own conjecture.  The Gaussian integers $\mathbb{Z}[i]$  form a $\mathbb{Z}$-module with basis $\{ 1, i \}$.  Every prime in $\mathbb{Z}[i]$  has conjugates and associates.  If we can find a decomposition for a prime $n \in \mathbb{Z}[i]$  as in (\ref{esconj}), where $a,b,c \in \mathbb{Z}[i]$, then we will have a decomposition for all associates and conjugates of this prime.  It suffices to consider primes where both the real and imaginary parts are positive or, in other words, $n \in \mathbb{Z}[i]$  within the positive cone generated by the $\mathbb{Z}$-module basis $\{ 1, i \}$.  If we wanted to restrict the possible solutions to a specific cone within $\mathbb{Z}[i]$, then we would need to find $a,b,c \in \mathbb{Z}[i]$  within the positive and negative cone, or simply cone, generated by the $\mathbb{Z}$-module basis $\{ 1,i \}$.  The following conjecture is the analogue of the natural number Erd\H{o}s-Straus conjecture.

\begin{conjecture}{(Bradford-Ionascu)} \label{biconj}
Let ${\mathcal E}:=\{ 0, 1, i, 1+i \}$.  For $n \in \mathbb{Z}[i] \backslash {\mathcal E}$  with the real and imaginary part of $n$ nonnegative, (\ref{esconj}) has a solution $a, b, c \in \mathbb{Z}[i]$  such that the real and imaginary parts of $a,b$  and $c$ are either both nonnegative or both nonpositive.
\end{conjecture}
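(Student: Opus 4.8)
The plan is to establish Conjecture~\ref{biconj} by adapting the elementary decompositions in (\ref{esint}) to $\mathbb{Z}[i]$ while keeping track of which quadrant each denominator lands in. Write $C$ for the cone in the statement, the union of the closed first and third quadrants, and note $C=-C$. Before constructing anything I would cut down the search region using only the symmetries that genuinely preserve $C$. Multiplying (\ref{esconj}) by a unit $u$ sends a solution $(n;a,b,c)$ to $(un;ua,ub,uc)$, and complex conjugation (a field automorphism of $\mathbb{Q}(i)$) sends it to $(\bar n;\bar a,\bar b,\bar c)$. Conjugation alone maps $C$ to the other diagonal cone, but the composite $z\mapsto i\bar z$, which is reflection across the line $\mathrm{Im}=\mathrm{Re}$, maps $C$ to itself; combined with $z\mapsto -z$ this lets me assume $\mathrm{Re}(n)\ge\mathrm{Im}(n)\ge 0$. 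I would stress at this point that the usual multiplicative reduction to primes is \emph{not} available: if $4/\pi=1/a+1/b+1/c$ with $a,b,c\in C$ and $n=\pi t$, then $4/n=1/(at)+1/(bt)+1/(ct)$, but multiplication by $t$ rotates the denominators out of $C$ in general. So unlike the unrestricted decomposition problem, the cone version must be attacked for every $n$, not merely for primes.

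Next I would dispose of the easy residue classes directly. When $2\mid n$ the decomposition $4/n=1/(n/2)+1/(n/2)$, padded to three terms via $1/(n/2)=1/n+1/n$, keeps every denominator on the ray through $n$ and hence in $C$. For the remaining odd $n$ I would run the Mordell-type reduction over $\mathbb{Z}[i]$: fix a first denominator $a\in C$ and write $4/n-1/a=(4a-n)/(na)$, so the task reduces to expressing a single Gaussian fraction as $1/b+1/c$ with $b,c\in C$. Choosing $a$ so that $4a-n$ is a unit times a convenient Gaussian divisor of $na$ collapses the two-term part, exactly in the spirit of the clean two-term identity in the $n=4k+3$ line of (\ref{esint}). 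The combinatorial heart is then to sort the Gaussian integers by residue modulo a small modulus $m$ and, for each admissible class, exhibit one explicit cone decomposition. The sign bookkeeping that forces the stray minus sign in the $n=4k+1$ line of (\ref{esint}) is precisely what must be avoided here, since a negative term leaves $C$.

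The main obstacle is the Gaussian analogue of the genuinely hard natural-number case $n\equiv 1\pmod 4$: for these residue classes every naive identity pushes one denominator into the wrong quadrant, and repairing it is the whole content of the conjecture. My strategy would be to cover as many classes as possible by a finite list of Mordell identities taken modulo several small moduli, thereby reducing to finitely many exceptional classes, and then to settle those individually. Proving solvability for \emph{all} $n$ is exactly where the argument stalls, which is why the statement is posed as Conjecture~\ref{biconj} rather than as a theorem. Short of a complete proof I would support it by verifying it computationally for $N(n)$ below a large bound and by a density estimate showing that the set of $n$ not covered by the explicit identities has density zero; the fact that $C$ is a two-dimensional cone, whereas the positive cone $\mathbb{N}\subset\mathbb{Z}$ is one-dimensional, is the structural reason to expect this Gaussian statement to be more tractable than the classical Erd\H{o}s--Straus conjecture.
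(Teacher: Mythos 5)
The statement you are trying to prove is stated in the paper as Conjecture~\ref{biconj} and is \emph{not} proved there: the authors only establish the unrestricted version (Theorem~\ref{main} and the theorems of Sections~\ref{sec3}--\ref{sec4}), where $a,b,c$ may lie anywhere in $\mathbb{D}[d]$, and they explicitly present the cone-restricted Gaussian statement as open. So there is no proof in the paper to compare yours against, and your proposal, by its own admission, is an attack plan rather than a proof.

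The pieces you do pin down are correct and consistent with the paper's discussion. The even case works: if $2\mid n$ and $n$ lies in the cone $C$, then $n/2$ and $n$ lie on the same ray, so $4/n=1/(n/2)+1/n+1/n$ is a valid cone decomposition. The symmetry $z\mapsto i\bar z$ does preserve $C$ and does carry solutions to solutions (since $1/(i\bar a)+1/(i\bar b)+1/(i\bar c)=(1/i)\,\overline{4/n}=4/(i\bar n)$), so the reduction to $\mathrm{Re}(n)\ge\mathrm{Im}(n)\ge 0$ is legitimate. Your observation that the problem does not reduce to primes is exactly the point the authors make with the example $3+i=(-i)(1+i)(1+2i)$, where the inherited decomposition (\ref{example2}) leaves the cone and a fresh one (\ref{example3}) must be found. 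The genuine gap is everything after that: you never exhibit the finite list of Mordell-type identities, never verify that each produces denominators \emph{inside} $C$ (which is precisely where the naive identities, like the $n=4k+1$ line of (\ref{esint}) with its minus sign, fail), and never show the residue classes covered exhaust all $n$ outside a finite set. Computational verification up to a bound and a density-zero estimate for the uncovered set would not close this gap. In short, your writeup correctly locates the difficulty but, like the paper, leaves the conjecture unproved; it should not be presented as a proof.
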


\begin{figure} \label{image1}
\includegraphics[scale = 0.6, trim = {2.5cm 21.5cm 0 0}]{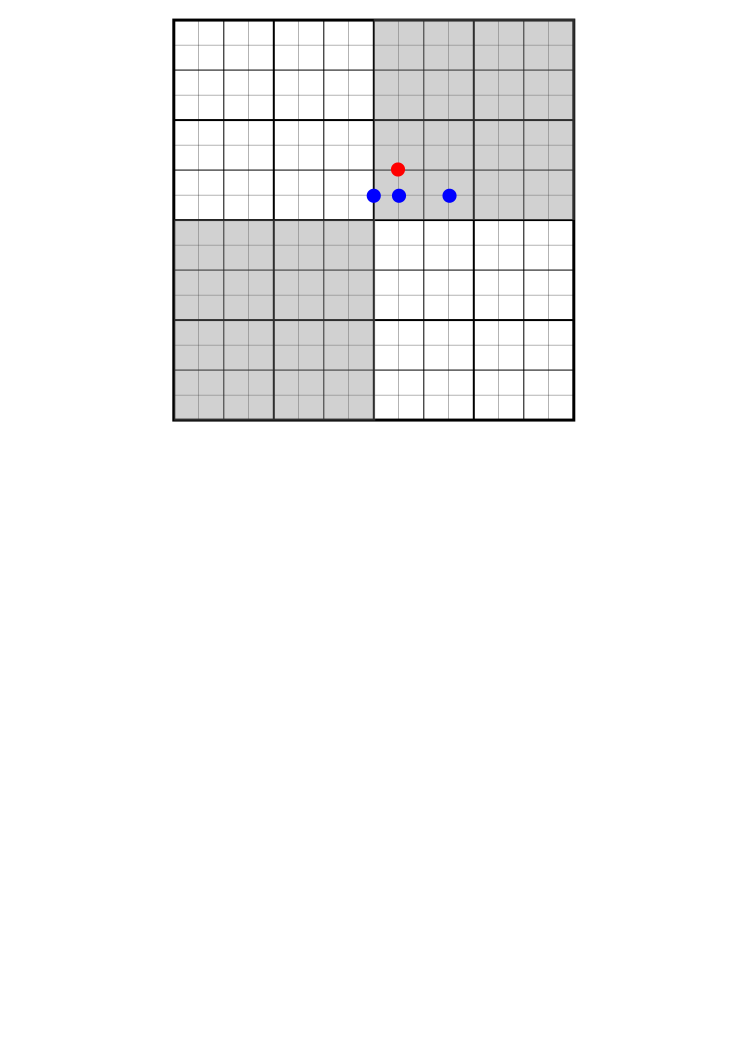}
\caption{This graph represents $\mathbb{C}$.  The red dot represents $1+2i$  and the blue dots are the values $x,y,z \in \mathbb{Z}$  that satisfy (\ref{esconj})  for $1+2i$}
\end{figure}

Figure \ref{image1}.  shows an example of (\ref{esconj}) having a solution $a,b,c \in \mathbb{Z}[i]$  when $n = 1+2i$.  Here 

\begin{equation} \label{example1} 
\frac{4}{1+2i} = \frac{1}{i} + \frac{1}{1+i} + \frac{1}{3+i}.
\end{equation}

\ 

Notice that all of the Gaussian integers in the denominators of the unit fractions in (\ref{example1}) are in the appropriate region for our conjecture, which is highlighted as the gray region in figure \ref{image1}.  However, the restriction of the solution to (\ref{esconj}) for $a,b,c \in \mathbb{Z}[i]$  to this cone introduces complications.  For example, if we have a solution to (\ref{esconj}) for $n \in \mathbb{N}$, then we necessarily have a solution of (\ref{esconj})  for $nm$  for any $m \in \mathbb{N}$.  In other words, the original Erd\H{o}s-Straus conjecture reduces to finding a solution to (\ref{esconj})  for prime natural numbers.  This is not the case for our conjecture.  For example $1+2i$  is prime in $\mathbb{Z}[i]$, which has a unique solution outlined in (\ref{example1}), and $1+i$  is a prime number in $\mathcal{E}$.  We see that $3+i = (-i)(1+i)(1+2i)$, yet we see that

\begin{equation} \label{example2}
\begin{split}
\frac{4}{3+i} &= \frac{1}{(-i)(1+i)} \cdot \frac{4}{1+2i} \\
&= \frac{1}{(-i)(1+i)} \cdot \left( \frac{1}{i} + \frac{1}{1+i} + \frac{1}{3+i} \right) \\
&= \frac{1}{1+i} + \frac{1}{2} + \frac{1}{4-2i}
\end{split}
\end{equation}

\ 

\noindent is the only decomposition possible when decomposing one of the prime factors of $3+i$.  This does not, however, imply that our conjecture does not hold.  It means that our conjecture does not reduce to finding a solution to (\ref{esconj}) for prime Gaussian integers outside of $\mathcal{E}$.  We see that 

\begin{equation} \label{example3}
\frac{4}{3+i} = \frac{1}{1} + \frac{1}{1+3i} + \frac{1}{5+5i}. 
\end{equation}

\ 

Before we attempt to show that (\ref{esconj}) has a solution for all Gaussian integers in this restricted cone, except for those in $\mathcal{E}$, we first seek to show that (\ref{esconj})  has a solution outside of an exceptional set for all Gaussian integers. The Gaussian integers form a unique factorization domain, so without a restriction to a cone, this will reduce to finding a solution of (\ref{esconj}) for prime Gaussian integers outside of an exceptional set.  Finding a solution of (\ref{esconj}) for all Gaussian integers will add a foundation to our conjecture, but the difficultly of finding solutions for these two should be vastly different as in the relaxation of finding integer solutions to (\ref{esconj}) rather than natural number solutions.  Also we address at this point that the Gaussian integers arise from a ring extension.  There are many extensions for which this process works and this paper discusses solutions to (\ref{esconj}) when those extensions are of degree two and norm-Euclidean.

\section{Main Result} \label{sec2}

We would like to determine whether (\ref{esconj}) having a solution in a ring with identity, outside of a finite, exceptional set, is a consequence of unique factorization or whether it requires more structure.  Finding solutions in general rings is difficult so we begin by considering the ring of integers for quadratic fields.  It is still unclear which rings of integers have unique factorization, but the norm-Euclidean quadratic fields have been fully classified \cite{fl}.  These fields are $\mathbb{Q}(\sqrt{d})$  where $d$  takes values $$ -11,-7, -3, -2, -1, 2, 3, 5, 6, 7, 11, 13, 17, 19, 21, 29, 33, 37, 41, 57, 73.$$

\ 

The rings of integers for quadratic fields have been thoroughly studied.  We will use the notation $\mathbb{D}[d]$  to represent the ring of integers for the quadratic field $\mathbb{Q}(\sqrt{d})$.  We can cite \cite{dam} to argue that the proof of the following is an elementary homework problem in algebraic number theory: 
\begin{equation} \label{ringofint}
\mathbb{D}[d]=
\begin{cases}
\mathbb{Z}[\sqrt{d}] &\text{if $d\equiv 3$ (mod $4$)}  \\
\mathbb{Z}[\frac{1+\sqrt{d}}{2}] &\text{if $d\equiv 1$ (mod $4$)}.
\end{cases}
\end{equation}  

\ 

In the introduction we showed that (\ref{esconj}) has a solution for integers values of $n$  when $|n| \geq 2$, however, the natural number solutions remain elusive.  We also mentioned that showing (\ref{esconj}) has a solution for Gaussian integers provides a foundation for proving conjecture \ref{biconj}.  The following theorem is the main result of this paper and it provides a foundation for making conjectures about solutions in restricted regions.  This result will prove that a sufficient condition for (\ref{esconj}) having a solution in $\mathbb{D}[d]$  is having $\mathbb{Q}(\sqrt{d})$  be norm-Euclidean.

\begin{theorem} \label{main}
Let $\mathbb{Q}(\sqrt{d})$  be a norm-Euclidean quadratic field and let $\mathbb{D}[d]$  be its ring of integers.  Letting $\mathcal{E}_{d}$  be a finite exceptional set, (\ref{esconj}) has a solution $a,b,c \in \mathbb{D}[d]$ for every $n \in \mathbb{D}[d] \backslash \mathcal{E}_{d}$.
\end{theorem}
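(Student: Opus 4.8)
The plan is to exploit the two structural gifts of a norm-Euclidean field: unique factorization and the multiplicativity of the norm. First I would record that a norm-Euclidean ring is a Euclidean domain, hence a PID and a UFD, and that the identity
\[
\frac{4}{mn}=\frac{1}{m}\left(\frac{1}{a}+\frac{1}{b}+\frac{1}{c}\right)=\frac{1}{ma}+\frac{1}{mb}+\frac{1}{mc}
\]
shows that solvability of (\ref{esconj}) for a factor forces solvability for the whole. Consequently it suffices to solve (\ref{esconj}) for prime $n=\pi$, outside a finite set of primes. Two families fall immediately: if $2\mid n$ then $\frac{4}{n}=\frac{1}{n/2}+\frac{1}{n}+\frac{1}{n}$ settles the even case, and if $\pi$ is an inert (rational) prime $p$ then $\frac{4}{\pi}=\frac{4}{p}$ is solved verbatim by (\ref{esint}) with denominators in $\mathbb{Z}\subseteq\mathbb{D}[d]$.

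The core of the argument is the remaining case of a prime $\pi$ lying over an odd rational prime, so that $\pi\bar\pi$ is an associate of an odd prime. Here I would not reduce to $\mathbb{Z}$, because $\frac{4}{\pi}=\frac{4\bar\pi}{\pi\bar\pi}$ carries a stray conjugate factor that is not absorbed by the integer decompositions; instead I peel off a term whose denominator is a multiple of $\pi$. Choosing $w\equiv 4^{-1}\pmod{\pi}$ (possible since $\gcd(4,\pi)=1$ for odd $\pi$) makes $\pi\mid 4w-1$, and then
\[
\frac{4}{\pi}=\frac{1}{w\pi}+\frac{4w-1}{w\pi}=\frac{1}{w\pi}+\frac{s}{w},\qquad s=\frac{4w-1}{\pi}\in\mathbb{D}[d].
\]
The problem is thereby reduced to writing $\frac{s}{w}$ as a sum of two unit fractions. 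The norm-Euclidean hypothesis is exactly what makes this tractable: the residue $s$ may be taken in a fixed class modulo $4$, the quotients furnished by the division algorithm have norm smaller than $N(4)=16$, and the small-norm elements available as denominators are just the units and the primes above $2$. The decomposition (\ref{example1}), whose denominators are a unit, the prime $1+i$ above $2$, and the multiple $3+i=(1-i)(1+2i)$ of $\pi=1+2i$, is precisely an instance of this shape, and I would organize the general construction as a finite case analysis on $n\bmod 4$ modeled on it. I expect this to be the main obstacle: producing the two unit fractions uniformly in $\pi$, i.e. showing that the relevant congruence and divisibility conditions are solvable for all but finitely many $\pi$.

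Finally I would verify that the leftover set $\mathcal{E}_d$ is finite. The only primes for which the construction can stall are those of small norm together with the primes above $2$, of which there are at most two. For inert or ramified $2$ the square of such a prime is an associate of $2$ and hence even, so all its higher powers are already solved and only the prime itself can be exceptional; for split $2$ I would solve $\frac{4}{\pi}$ for each degree-one prime above $2$ directly, drawing on the abundant units in the real-quadratic cases, so that multiplicativity again absorbs all powers. Since every element of sufficiently large norm has a prime factor handled by the even or odd-prime constructions, only finitely many $n$ (units, a few small-norm elements, and the unabsorbed primes above $2$) can remain, giving a finite $\mathcal{E}_d$ and completing the proof.
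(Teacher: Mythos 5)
Your reduction to primes, your treatment of even $n$ and of inert rational primes via (\ref{esint}), and your bookkeeping for powers of the primes above $2$ are all sound, but the argument has a genuine gap exactly where you yourself locate ``the main obstacle'': the case of a prime $\pi$ of odd norm is never actually carried out. Writing $\frac{4}{\pi}=\frac{1}{w\pi}+\frac{s}{w}$ with $4w\equiv 1\pmod{\pi}$ only restates the problem as ``express $s/w$ as a sum of two unit fractions,'' and the remarks that follow (remainders modulo $4$ have norm below $16$; the small-norm elements are units and primes above $2$) do not produce those two fractions: no identity, no list of cases, and no argument that the construction succeeds for all but finitely many $\pi$ is given. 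For $d<0$ this case is the entire content of the theorem, and the paper devotes all of Section \ref{sec4} to it: it writes $n=4m+r$, uses unit and conjugation symmetry (Propositions \ref{supporting1}, \ref{supporting2}, \ref{supporting4}) to cut the sixteen residues down to a handful, disposes of $r=-1$ and of a generic secondary residue $+1$ by the telescoping identities of Proposition \ref{supporting5}, and for the stubborn residues performs a second division of $m$ by an element such as $1-2\omega$ (Proposition \ref{supporting3}) followed by ring-specific identities for each secondary remainder. Nothing in your proposal substitutes for that machinery, so what you have is a plan for the hard case rather than a proof of it.

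You also miss the simplification that makes the sixteen real fields trivial: there the paper does not factor $n$ at all. A solution of Pell's equation $x^{2}-dy^{2}=1$ with $x\equiv -1\pmod 4$ and $y\equiv 0\pmod 4$ yields
\begin{equation*}
4=\frac{1}{a+b\sqrt{d}}+\frac{1}{a-b\sqrt{d}},
\end{equation*}
and dividing through by $n$ solves (\ref{esconj}) for every $n\neq 0$ with only two unit fractions, so that $\mathcal{E}_{d}=\{0\}$ in those cases. Your uniform prime-by-prime strategy would make the real cases far harder than necessary, and your appeal to ``the abundant units in the real-quadratic cases'' gestures at the right tool without using it. As written, the proposal establishes the theorem only for $n$ with an even or inert rational prime factor; the remaining, and central, case is left open.
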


This is not to say that $(\ref{esconj})$  does not have solutions in general for the rings of integers of quadratic fields that are not norm-Euclidean.  We can highlight this with the following decomposition in $\mathbb{Z}[\omega]$  where $\omega = (1 \slash 2) + (\sqrt{69} \slash 2)$: 

\begin{equation} \label{example4}
4 = \frac{1}{1710 + 468 \omega} + \frac{1}{2178 - 468 \omega} .
\end{equation}

\ 

It is well-known that the ring of integers for a quadratic field $\mathbb{Q}(\sqrt{d})$  will be a unique factorization domain if it has class number $1$.  Determining the values of $d \geq 0$  so that $\mathbb{D}[d]$ has class number $1$  is an open problem whereas it is well-established that the only possible values of $d \leq 0$  are those mentioned already for norm-Euclidean quadratic fields as well as the following: $$-19, -43, -67, -163.$$

\ 

Although we include no proofs in this paper, we also want to suggest there are likely decompositions in these cases as well.  We also conjecture that a sufficient condition for (\ref{esconj}) having a solution in the ring of integers for a quadratic field is having $\mathbb{D}[d]$  be a unique factorization domain. 

The rest of the paper is organized as follows:  in section \ref{sec3} we find decompositions for the ring of integers for norm-Euclidean quadratic fields when $d \geq 0$  and in section \ref{sec4}  we find decompositions when $d \leq 0$.

\section{Positive values} \label{sec3}

In this section we are interested in finding solutions to (\ref{esconj}) for the rings of integers $\mathbb{D}[d]$, where 

\begin{equation} \label{positivenumbers}
d \in \{ 2, 3, 5, 6, 7, 11, 13, 17, 19, 21, 29, 33, 37, 41, 57, 73 \}.
\end{equation}

\ 

It is quite interesting and somewhat unexpected that we have a rather trivial situation in each of these cases.

\begin{theorem}\label{squarerootof2} For every $n\in \mathbb{D}[d] \setminus \{0\}$ there exist
 $a$, $b$ in $\mathbb{D}[d]$ such that 
 
\begin{equation} \label{twounit}
\frac{4}{n}=\frac{1}{a}+\frac{1}{b}.
\end{equation}

\end{theorem}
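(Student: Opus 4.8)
The plan is to remove the dependence on $n$ entirely by reducing the theorem to a single, universal decomposition of the constant $4$. Observe that if we can produce nonzero $\alpha,\beta \in \mathbb{D}[d]$ with
\[ 4 = \frac{1}{\alpha} + \frac{1}{\beta}, \]
then for every nonzero $n \in \mathbb{D}[d]$ the choice $a = n\alpha$, $b = n\beta$ gives $\tfrac{1}{a} + \tfrac{1}{b} = \tfrac{1}{n}\left(\tfrac{1}{\alpha} + \tfrac{1}{\beta}\right) = \tfrac{4}{n}$, which is exactly (\ref{twounit}). So it suffices to decompose $4$ itself, uniformly in $n$; note that this is precisely the mechanism displayed in (\ref{example4}), which already hints that the phenomenon is general.

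To find such a decomposition I would look for a conjugate pair. Write $\bar{\,\cdot\,}$ for the nontrivial automorphism of $\mathbb{Q}(\sqrt d)$ and set $\mathrm{N}(\alpha) = \alpha\bar\alpha$ and $\mathrm{Tr}(\alpha) = \alpha + \bar\alpha$. Taking $\beta = \bar\alpha$ yields $\tfrac{1}{\alpha} + \tfrac{1}{\bar\alpha} = \tfrac{\mathrm{Tr}(\alpha)}{\mathrm{N}(\alpha)}$, so the task becomes finding $\alpha \in \mathbb{D}[d]$ with $\mathrm{N}(\alpha) \neq 0$ and $\mathrm{Tr}(\alpha) = 4\,\mathrm{N}(\alpha)$. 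Putting $\gamma = 4\alpha$ and $\eta = \gamma - 1$, the identity $\mathrm{N}(\gamma - 1) = \mathrm{N}(\gamma) - \mathrm{Tr}(\gamma) + 1$ shows that $\mathrm{Tr}(\alpha) = 4\,\mathrm{N}(\alpha)$ is equivalent to $\mathrm{N}(\eta) = 1$; moreover $\alpha = \tfrac{1+\eta}{4}$ lies in $\mathbb{D}[d]$ exactly when $\eta \equiv -1 \pmod 4$, and $\alpha \neq 0$ exactly when $\eta \neq -1$. Thus everything reduces to exhibiting a unit $\eta \in \mathbb{D}[d]$ of norm $+1$ with $\eta \equiv -1 \pmod 4$ and $\eta \neq -1$.

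This is where I would use that $d > 0$, so $\mathbb{Q}(\sqrt d)$ is real. By Dirichlet's unit theorem the unit group is infinite, and there is a unit $u$ of norm $+1$ and infinite order (the fundamental unit if its norm is $+1$, otherwise its square). Since $\mathbb{D}[d]/4\,\mathbb{D}[d]$ is a finite ring, the image of $u$ has finite multiplicative order $m$, so $u^m \equiv 1 \pmod 4$. I would then take $\eta = -u^m$: its norm is $\mathrm{N}(-1)\,\mathrm{N}(u)^m = 1$, it satisfies $\eta \equiv -1 \pmod 4$, and $\eta \neq -1$ because $u^m \neq 1$. The resulting $\alpha = \tfrac{1 - u^m}{4}$ together with $\beta = \bar\alpha$ then solves $4 = \tfrac{1}{\alpha} + \tfrac{1}{\beta}$, completing the argument.

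Once the reduction is in place the remaining steps are routine congruence bookkeeping; the one point demanding care, and the main obstacle, is avoiding the degenerate solution. The naive choice $\eta = -1$ satisfies both the norm and congruence conditions but forces $\alpha = 0$, so one must genuinely invoke the infinitude of the unit group (via $u^m$ with $u$ of infinite order) to reach the congruence class of $-1$ without landing on $-1$ itself. This is exactly the feature peculiar to the real case $d > 0$ that makes two unit fractions suffice, in contrast to the three required over $\mathbb{Z}$ or $\mathbb{N}$.
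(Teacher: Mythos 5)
Your proposal is correct, but it proves the theorem by a genuinely different and more general route than the paper. The paper's proof is purely exhibitional: for each of the sixteen positive norm-Euclidean values of $d$ it writes down an explicit identity $4 = \frac{1}{a+b\sqrt{d}} + \frac{1}{a-b\sqrt{d}}$ (found, per the discussion around (\ref{pell1})--(\ref{fullpell}), from suitable solutions of Pell's equation) and then divides by $n$; this is elementary and immediately checkable, but case-by-case, and it leaves unexplained why such identities must exist --- indeed the paper only asserts without proof, after the theorem, that the decomposition of $4$ works for all squarefree $d>0$. Your argument supplies exactly that missing uniform proof: reducing to a conjugate pair, translating the condition $\mathrm{Tr}(\alpha)=4\,\mathrm{N}(\alpha)$ into finding a norm-one unit $\eta\equiv -1 \pmod{4}$ with $\eta\neq -1$, and producing $\eta=-u^{m}$ from a norm-one unit $u$ of infinite order via the finiteness of $\mathbb{D}[d]/4\mathbb{D}[d]$ is sound at every step, and it covers every real quadratic ring of integers, not just the norm-Euclidean ones. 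Reassuringly, for $d=2$ your construction ($u=3+2\sqrt{2}$, $m=2$, $\alpha=-4-3\sqrt{2}$) reproduces the paper's first identity verbatim. Your approach even repairs a small blemish in the paper: the listed identity for $d=19$ uses three unit fractions rather than two, whereas your argument guarantees a genuine two-term decomposition there (explicitly, $\alpha = 14450+3315\sqrt{19}$, coming from the square of the fundamental Pell solution $(170,39)$). The trade-off is that the paper's proof is self-contained and computable by hand, while yours invokes Dirichlet's unit theorem (or at least the solvability of Pell's equation) but explains the phenomenon and generalizes it.
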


\begin{proof}
\ 

\n The proof of this statement follows from the following identities:

$$ 4 = \frac{1}{-4+3\sqrt{2}} + \frac{1}{-4-3\sqrt{2}}, $$

$$ 4 = \frac{1}{2+\sqrt{3}} + \frac{1}{2-\sqrt{3}}, $$

$$ 4 = \frac{1}{-3+2\left( \frac{1}{2} + \frac{\sqrt{5}}{2} \right)} + \frac{1}{-1-2 \left( \frac{1}{2} + \frac{\sqrt{5}}{2} \right)}, $$

$$ 4 = \frac{1}{12+5\sqrt{6}} + \frac{1}{12-5\sqrt{6}}, $$

$$ 4 = \frac{1}{32+12\sqrt{7}} + \frac{1}{32-12\sqrt{7}}, $$

$$ 4 = \frac{1}{50+15\sqrt{11}} + \frac{1}{50-15\sqrt{11}}, $$

$$ 4 = \frac{1}{-207+90\left( \frac{1}{2} + \frac{\sqrt{13}}{2} \right)} + \frac{1}{-117-90 \left( \frac{1}{2} + \frac{\sqrt{13}}{2} \right)}, $$

$$ 4 = \frac{1}{-10+4\left( \frac{1}{2} + \frac{\sqrt{17}}{2} \right)} + \frac{1}{-6-4 \left( \frac{1}{2} + \frac{\sqrt{17}}{2} \right)}, $$

$$ 4 = \frac{1}{1} + \frac{1}{57+13\sqrt{19}} + \frac{1}{57-13\sqrt{19}}, $$

$$ 4 = \frac{1}{11+6\left( \frac{1}{2} + \frac{\sqrt{21}}{2} \right)} + \frac{1}{17-6 \left( \frac{1}{2} + \frac{\sqrt{21}}{2} \right)}, $$

$$ 4 =  \frac{1}{-2905 + 910 \left( \frac{1}{2} + \frac{\sqrt{29}}{2} \right)} + \frac{1}{-1995-910 \left( \frac{1}{2} + \frac{\sqrt{29}}{2} \right)}, $$

$$ 4 = \frac{1}{5+2\left( \frac{1}{2} + \frac{\sqrt{33}}{2} \right)} + \frac{1}{7-2 \left( \frac{1}{2} + \frac{\sqrt{33}}{2} \right)}, $$

$$ 4 = \frac{1}{-21+6\left( \frac{1}{2} + \frac{\sqrt{37}}{2} \right)} + \frac{1}{-15-6 \left( \frac{1}{2} + \frac{\sqrt{37}}{2} \right)}, $$

$$4=\frac{1}{-592+160\left( \frac{1}{2} + \frac{\sqrt{41}}{2} \right)}+\frac{1}{ -432-160\left( \frac{1}{2} + \frac{\sqrt{41}}{2} \right)},$$

$$4 = \frac{1}{33+10\left( \frac{1}{2} + \frac{\sqrt{57}}{2} \right)} + \frac{1}{43-10 \left( \frac{1}{2} + \frac{\sqrt{57}}{2} \right)}, $$

$$4=\frac{1}{-637062+133500\left( \frac{1}{2} + \frac{\sqrt{73}}{2} \right)}+\frac{1}{-503562-133500\left( \frac{1}{2} + \frac{\sqrt{73}}{2} \right)}.$$
\end{proof}

We point out that for the $d \geq 0$  mentioned above, the pattern appears to be that there exists $a+b\sqrt{d} \in \mathbb{D}[d] \backslash \{ 0 \}$ so that 

\begin{equation} \label{pell1} 
4 = \frac{1}{a+b\sqrt{d}} + \frac{1}{a-b\sqrt{d}}.
\end{equation}

\ 

This can be rewritten to suggest that for the given $d \geq 0$  there exist $a,b \in \mathbb{Z}$  such that 

\begin{equation} \label{pell2} 
(4a -1)^2 - d(4b)^{2} = 1.
\end{equation}  

\ 

If we relabel $x = 4a-1$  and $y = 4b$  we can see that we are looking for specific solutions to Pell's equation (see \cite{ne})

\begin{equation} \label{fullpell}
x^2 - dy^2 =1.
\end{equation}

\ 

It is not difficult to show that $4$  can be decomposed as in (\ref{twounit}) for all quadratic fields $\mathbb{D}[d]$  for which $d$  is a squarefree, positive integer.  

\section{Negative values} \label{sec4}

In this section we are interested in solving (\ref{esconj}) for rings of integers of norm-Euclidean quadratic fields $\mathbb{Q}(\sqrt{d})$  for which 

\begin{equation} \label{negativenumbers}
d \in \{-1,-2,-3,-7,-11\}.
\end{equation}

\ 

Notice that all of these fields are subsets of $\mathbb{C}$.  Much of the methodology in finding decompositions, as in (\ref{esconj}), for the rings in this section is the same, however, each ring brings its own complications.  To simplify this as much as possible we introduce some propositions that will be used in every scenario.  We also define some functions that will simplify our notation and make it easy to identify the general pattern to the decompositions.  

The first step in the every possible scenario of $d$  will be the same.  If we take any number $n \in \mathbb{D}[d]$  and divide it by $4$, we can consider the remainder and find our first unit fraction.  For example, if there exists $m,r \in \mathbb{D}[d]$  with $m \neq 0$  so that $n = 4m + r$, then we can write 

\begin{equation} \label{initialdecomp}
\frac{4}{n} = \frac{1}{m} - \frac{r}{nm}.
\end{equation}

\ 

The following proposition explains the nature of $r$  in every scenario and it will allow us to make initial statements about which values of $r$  will make $n$  a prime number.

\begin{proposition} \label{supporting6}
If we take any number in $\mathbb{D}[d]$  and divide it by $4$  we have sixteen possible remainders.  Expressing $\mathbb{D}[d] = \mathbb{Z}[\omega]$  where $\omega$  is defined as in (\ref{ringofint}), we see that the remainders will be $m+n\omega$  where $m,n \in \{ -2, -1, 0, 1, 2\}$  and $-2$  and $2$  can be interchanged.  \\

\noindent Furthermore, letting $x+y\omega = 4(a+b\omega) + (m+n\omega)$  with $m,n \in \{ -2,-1,0,1,2 \}$  and $m+n\omega$  be a multiple of a prime divisor of $2$, we have that $x+y\omega$  is not a prime number unless $a+b\omega = 0$  and $m+n$  is an associate of a prime divisor of $2$.
\end{proposition}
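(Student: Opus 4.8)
The plan is to treat the two assertions of the proposition separately: first the count of sixteen residues, which is purely a statement about the $\mathbb{Z}$-module structure, and then the primality obstruction, which rests on a short divisibility argument followed by a finite norm computation.

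For the first assertion I would use only the module structure. Writing $\mathbb{D}[d]=\mathbb{Z}[\omega]$, every element is uniquely $x+y\omega$ with $x,y\in\mathbb{Z}$, and the ideal $(4)=4\mathbb{Z}[\omega]$ consists exactly of those elements whose two coordinates are both divisible by $4$. Hence the additive quotient is $\mathbb{Z}[\omega]/(4)\cong(\mathbb{Z}/4\mathbb{Z})\oplus(\mathbb{Z}/4\mathbb{Z})$, which has $16$ elements. Choosing the balanced residue system $\{-2,-1,0,1,2\}$ modulo $4$ for each coordinate (where $2\equiv-2$, so there are four genuine classes) then produces the sixteen representatives $m+n\omega$ with $m,n\in\{-2,-1,0,1,2\}$, which is the first claim.

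For the second assertion, fix a prime $\pi\in\mathbb{Z}[\omega]$ dividing $2$ and suppose $m+n\omega$ is a multiple of $\pi$, so $\pi\mid m+n\omega$. The key observation is a divisibility chain: since $\pi\mid 2$ we also have $\pi\mid 4$, hence $\pi\mid 4(a+b\omega)$, and combining this with $\pi\mid(m+n\omega)$ gives
\[
\pi \mid 4(a+b\omega)+(m+n\omega)=x+y\omega.
\]
Now assume $x+y\omega$ is prime. Since the field is norm-Euclidean, $\mathbb{Z}[\omega]$ is a Euclidean domain and therefore a unique factorization domain, so a non-unit $\pi$ dividing the irreducible element $x+y\omega$ must be an associate of it; thus $x+y\omega\sim\pi$ and in particular $N(x+y\omega)=N(\pi)$, where $N$ is the field norm.

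It remains to pin down $N(\pi)$ and to force $a+b\omega=0$. Because $N(\pi)\mid N(2)=4$ and $\pi$ is not a unit, $N(\pi)\in\{2,4\}$, the value depending on whether $2$ ramifies, splits, or is inert in each field; I would record this for each $d$, obtaining $N(\pi)=2$ for $d\in\{-1,-2,-7\}$ and $N(\pi)=4$ with $\pi\sim 2$ for $d\in\{-3,-11\}$. Writing $x=4a+m$ and $y=4b+n$ and using that the norm of an imaginary quadratic field is the positive definite form $N(x+y\omega)=x^2+|d|y^2$ for $d\equiv 2,3\ (\mathrm{mod}\ 4)$ and $N(x+y\omega)=x^2+xy+\frac{1-d}{4}y^2$ for $d\equiv 1\ (\mathrm{mod}\ 4)$, I would enumerate the finitely many $(x,y)$ with $N(x+y\omega)\le 4$. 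In every case these lattice points have both coordinates in $\{-2,-1,0,1,2\}$, so the balanced representative of $x+y\omega$ is $x+y\omega$ itself, forcing $a+b\omega=0$ and hence $x+y\omega=m+n\omega$; being prime and divisible by $\pi$, this $m+n\omega$ is then an associate of $\pi$, which is exactly the exceptional case. The same enumeration shows conversely that each element of norm $N(\pi)$ really is an associate of a prime divisor of $2$, so the exception is genuine.

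The main obstacle is this final step: the bound $N(x+y\omega)=N(\pi)\le 4$ must be shown to leave no room for $a+b\omega\ne 0$. The clean idea is that any nonzero $z\in\mathbb{Z}[\omega]$ satisfies $|z|\ge 1$ in an imaginary quadratic field, so $|4(a+b\omega)|\ge 4$; but the triangle inequality alone is too weak when $|m+n\omega|$ approaches $2$, so I expect to fall back on the explicit finite list of norm-$\le 4$ points for each of the two shapes of the quadratic form, verifying directly that their coordinates never exceed $2$ in absolute value. Handling this as one computation for $d\equiv 2,3$ and one for $d\equiv 1$, rather than five separate checks, is the tidiest route.
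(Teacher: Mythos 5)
Your proposal is correct and follows essentially the same route as the paper: the count of sixteen remainders comes from $\mathbb{Z}[\omega]/(4)\cong\mathbb{Z}_{4}\times\mathbb{Z}_{4}$ (the paper phrases this via the first isomorphism theorem for a map $\mathbb{Z}\times\mathbb{Z}\rightarrow\mathbb{Z}_{4}\times\mathbb{Z}_{4}$), and the primality obstruction comes from a prime divisor $\pi$ of $2$ dividing both $4$ and $m+n\omega$, hence dividing $x+y\omega$. The only divergence is at the very end, where the paper declares it ``trivial'' that factoring out $\pi$ leaves a non-unit unless $a+b\omega=0$, whereas you justify this by noting $N(x+y\omega)=N(\pi)\le 4$ and enumerating the norm-$\le 4$ lattice points for each $d$; your version supplies the detail the paper omits.
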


\begin{proof}
\ 

\noindent First notice that $\mathbb{D}[d] \simeq \mathbb{Z} \times \mathbb{Z}$.  \\

\noindent Denoting $(4)$  as the ideal generated by $4$  we see that $(4) \times (4)$  is an ideal in  $\mathbb{Z} \times \mathbb{Z}$. \\

\noindent Letting $\phi: \mathbb{Z} \times \mathbb{Z} \rightarrow \mathbb{Z}_{4} \times \mathbb{Z}_{4}$  be a ring homomorphism such that $\phi(1,0) = (1,0)$  and $\phi(0,1) = (0,1)$  we see that $\ker (\phi) = (4) \times (4)$. \\

\noindent Dividing a number in $\mathbb{D}[d]$  by $4$  and considering its remainder can be understood by taking the quotient of $\mathbb{D}[d]$  by $(4) \times (4)$, so we use the first isomorphism theorem to say that $\mathbb{Z} \times \mathbb{Z} \slash \ker (\phi) \simeq \mathbb{Z}_{4} \times \mathbb{Z}_{4}$. \\

\noindent The cardinality of this ring is $16$.  This implies that there are $16$  possible remainders after dividing a number in $\mathbb{D}[d]$  by $4$. \\

\noindent Because the possible remainders have a ring structure that is isomorphic to $\mathbb{Z}_{4} \times \mathbb{Z}_{4}$, we also see that the possible values of these remainders have coordinates that are elements of $\{ -2, -1, 0, 1, 2 \}$  where $-2$  and $2$  can be interchanged. \\

\noindent The second part of the proof is trivial.  If we let $m+n\omega$  be a multiple of a prime divisor of $2$, we see that $4$  is also a multiple of that same prime divisor.  Unless $m+n\omega$  is an associate of the prime divisor of $2$  and $a+b\omega = 0$, we can factor out the prime divisor from $m+n\omega$  and $4$  to find that $x+y\omega$  is a composite number. 
\end{proof}

We define a function $p: \mathbb{D}[d] \times \mathbb{D}[d] \rightarrow \mathbb{D}[d]$  by $p(a,b) = 4a + b$.  This function helps us account for the $16$ remainder scenarios from proposition \ref{supporting6}.  The value of $b$  will tell us which coset we are using and the different remainders require different techniques to find the decomposition as in (\ref{esconj}).  There are some remainders that use the same method for finding a decomposition.  The following two propositions reduce the number of remainder scenarios to decompose by using some symmetry within the rings $\mathbb{D}[d]$.

\begin{proposition} \label{supporting1}
Suppose that $b \in \mathbb{D}[d] \backslash \{ 0 \}$.  If there exists a decomposition as in (\ref{esconj}) for $4 \slash p(a,b)$  for all $a \in \mathbb{D}[d]$, then there exists a decomposition as in (\ref{esconj}) for $4 \slash p(a, ub)$  for all $a \in \mathbb{D}[d]$  and units $u \in \mathbb{D}[d]$.
\end{proposition}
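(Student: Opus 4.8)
The plan is to exploit the multiplicative action of units directly on the denominator $p(a,ub)=4a+ub$. The key algebraic observation is that any unit $u\in\mathbb{D}[d]$ can be factored out of this expression: writing
$$4a+ub = u\bigl(4u^{-1}a+b\bigr),$$
we recognize the right-hand side as $u\cdot p(u^{-1}a,\,b)$. Since $u$ is a unit, its inverse $u^{-1}$ again lies in $\mathbb{D}[d]$, so $u^{-1}a\in\mathbb{D}[d]$ and this factorization is legitimate inside the ring. This reduces a decomposition problem indexed by $ub$ to one indexed by $b$, at a shifted value of the first argument.

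First I would fix an arbitrary $a\in\mathbb{D}[d]$ and an arbitrary unit $u$, and set $a_{0}:=u^{-1}a\in\mathbb{D}[d]$. Applying the hypothesis to the element $a_{0}$ (which is permissible precisely because the assumption covers \emph{all} first arguments), there exist $x,y,z\in\mathbb{D}[d]$ with
$$\frac{4}{p(a_{0},b)} = \frac{1}{x}+\frac{1}{y}+\frac{1}{z}.$$
Next I would transport this decomposition across the factorization above, dividing through by $u$:
$$\frac{4}{p(a,ub)} = \frac{4}{u\,p(a_{0},b)} = \frac{1}{u}\left(\frac{1}{x}+\frac{1}{y}+\frac{1}{z}\right) = \frac{1}{ux}+\frac{1}{uy}+\frac{1}{uz}.$$

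Finally I would confirm that the result is a bona fide decomposition of the form (\ref{esconj}) in $\mathbb{D}[d]$: because $u$ is a unit and $x,y,z\in\mathbb{D}[d]$, each product $ux$, $uy$, $uz$ lies in $\mathbb{D}[d]$, so the right-hand side is a sum of three unit fractions with denominators in the ring. As $a$ and $u$ were arbitrary, this yields a decomposition of $4\slash p(a,ub)$ for every $a\in\mathbb{D}[d]$ and every unit $u$, which is the assertion. There is essentially no genuine obstacle here; the only points requiring care are the two closure checks, namely that $u^{-1}a\in\mathbb{D}[d]$ and that $ux,uy,uz\in\mathbb{D}[d]$, both of which follow immediately from $u$ being a unit of $\mathbb{D}[d]$.
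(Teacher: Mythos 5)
Your proposal is correct and is essentially the paper's own argument: both factor the unit out of the denominator via $p(a,ub)=u\,p(u^{-1}a,b)$ and then apply the hypothesis at the shifted first argument $u^{-1}a$, absorbing the unit into each of the three denominators. The only cosmetic difference is that the paper writes the inverse unit as a second unit $u_{2}$ with $u_{1}u_{2}=1$ rather than as $u^{-1}$.
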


\begin{proof}
\ 

\noindent Let $b \in \mathbb{D}[d] \backslash \{ 0 \}$.  Let $u_{1}, u_{2} \in \mathbb{D}[d]$  be units such that $u_{1} u_{2} = 1$.  For all $a \in \mathbb{D}[d]$  suppose that there exists $x,y,z \in \mathbb{D}[d]$  such that  $$ \frac{4}{p(a,b)} = \frac{1}{x} + \frac{1}{y} + \frac{1}{z}.$$

\

\noindent This implies that there exists $x', y', z' \in \mathbb{D}[d]$  for any $a \in \mathbb{D}[d]$ so that $$ \frac{4}{p(u_{2} a, b)} = \frac{1}{x'} + \frac{1}{y'} + \frac{1}{z'}.$$

\ 

\noindent Notice then that 

\begin{align*}
\frac{4}{p(a,u_{1} b)} &= \frac{4}{p(u_{1} u_{2} a, u_{1}b)} \\
&= \frac{4}{u_{1} p(u_{2} a,b)} \\
&= \frac{1}{u_{1}x'} + \frac{1}{u_{1}y'} + \frac{1}{u_{1}z'}.
\end{align*}
\end{proof}

As we mentioned earlier, all of the rings, $\mathbb{D}[d]$, discussed in this section are subsets of $\mathbb{C}$.   These rings are closed under conjugation.  The following proposition again reduces the number of remainder scenarios we need to consider through symmetry. 

\begin{proposition} \label{supporting2}
Suppose that $b \in \mathbb{D}[d] \backslash \{ 0 \}$.  If there exists a decomposition as in (\ref{esconj}) for $4 \slash p(a,b)$  for all $a \in \mathbb{D}[d]$, then there exists a decomposition as in (\ref{esconj}) for $4 \slash p(a, \bar{b})$  for all $a \in \mathbb{D}[d]$ .
\end{proposition}

\begin{proof}
\ 

\noindent Let $b \in \mathbb{D}[d] \backslash \{ 0 \}$.  For all $a \in \mathbb{D}[d]$  suppose there exists $x,y,z \in \mathbb{D}[d]$  such that $$ \frac{4}{p(a,b)} = \frac{1}{x} + \frac{1}{y} + \frac{1}{z}.$$

\noindent This implies that there exists $x',y',z' \in \mathbb{D}[d]$  for any $a \in \mathbb{D}[d]$  so that $$ \frac{4}{p(\bar{a},b)} = \frac{1}{x'} + \frac{1}{y'} + \frac{1}{z'}.$$

\noindent Notice then that 

\begin{align*}
\frac{4}{p(a,\bar{b})} &= \frac{4}{\overline{p(\bar{a},b)}} \\
&= \overline{\frac{4}{p(\bar{a},b)}} \\
&= \overline{\frac{1}{x'} + \frac{1}{y'} + \frac{1}{z'}} \\
&= \frac{1}{\overline{x'}} + \frac{1}{\overline{y'}} + \frac{1}{\overline{z'}}.
\end{align*}
\end{proof}

At this point we consider the remainder scenarios that exist after reductions through symmetry.  Some scenarios are shown to have decompositions rather easily while other scenarios require a more advanced method to find the decompositions.  For every ring, $\mathbb{D}[d]$, the methods used in the more complicated scenarios are roughly the same.  For example, we argued that finding an initial decomposition as in (\ref{initialdecomp}) would be the first step for finding the decomposition as in (\ref{esconj}).  For each remainder scenario, after the first division by $4$, the next step is to divide $m$  by $r$  and consider the possible remainders.  The following proposition tells us the nature of these remainders.  This proposition heavily uses the nature of a Euclidean domain to make its claim.

\begin{proposition} \label{supporting3}
If we let $x,n \in \mathbb{D}[d]$  be any numbers such that $|n|^{2}$  is odd, $|n| \neq 1$  and the complex (nonreal) component of $n$  is relatively prime from $|n|^{2}$, then there exists a number $q \in \mathbb{D}[d]$  and $r \in \mathbb{Z}$  such that $|r| \leq (|n|^{2} - 1) \slash 2$  and $x = nq + r.$
\end{proposition}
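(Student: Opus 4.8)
\n The plan is to reduce the assertion to a surjectivity statement about a residue ring and then read off the bound on $r$. Write $N = |n|^{2} = n\bar{n}$; since $\mathbb{D}[d]$ is closed under conjugation and $n\bar{n}$ is the field norm, $N$ is a positive rational integer, which is odd and at least $3$ by hypothesis. The central observation is that $N = n\bar{n}$ is divisible by $n$, so a rational integer that is altered by a multiple of $N$ stays in the same class modulo $n$. Consequently, if I can show that every element of $\mathbb{D}[d]$ is congruent modulo $n$ to some rational integer, I may then replace that integer by its symmetric representative in $\{-(N-1)/2,\dots,(N-1)/2\}$ (available because $N$ is odd) to obtain $r \in \mathbb{Z}$ with $|r| \leq (N-1)/2 = (|n|^{2}-1)/2$ and $n \mid (x - r)$; writing $x - r = nq$ then gives the claim.

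\n So the heart of the matter is to prove that the natural map $\mathbb{Z} \to \mathbb{D}[d]/(n)$ is surjective. Writing $\mathbb{D}[d] = \mathbb{Z}[\omega]$ as in (\ref{ringofint}), this image is all of the quotient as soon as $\omega$ itself is congruent modulo $n$ to a rational integer; and since $N$ is odd, $2$ is invertible modulo $n$, so this is equivalent to $\sqrt{d}$ being congruent to a rational integer. To produce such a congruence I would use conjugation: the trace $n + \bar{n}$ is a rational integer, so $\bar{n}$ is congruent to the rational integer $n + \bar{n}$ modulo $n$, while the difference $n - \bar{n}$ equals an integer multiple $c\sqrt{d}$ of $\sqrt{d}$ (with $c = t$ when $d \equiv 1$ and $c = 2t$ when $d \equiv 2,3$ modulo $4$, where $t$ is the coefficient of $\omega$). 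Combining these, $c\sqrt{d} = n - \bar{n} \equiv -(n+\bar{n})$ is congruent to a rational integer modulo $n$.

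\n It remains to divide by $c$. Here the hypothesis that the nonreal component of $n$ is relatively prime to $|n|^{2}$ enters: it amounts to the statement that $\gcd(t, N) = 1$, and since $N$ is odd this also yields $\gcd(c, N) = 1$. Thus $c$ is invertible in $\mathbb{Z}/N\mathbb{Z}$ with a rational-integer inverse, and because $N$ is divisible by $n$ this inverse still inverts $c$ in $\mathbb{D}[d]/(n)$. Multiplying the congruence above by it shows $\sqrt{d}$ is congruent modulo $n$ to a rational integer, completing the surjectivity step and hence the proof. The one genuinely delicate point — and the step I expect to require the most care — is precisely this invertibility: ordinary Euclidean division in $\mathbb{D}[d]$ would only guarantee a remainder lying in $\mathbb{D}[d]$, and it is the coprimality hypothesis, together with the oddness of $N$, that forces the remainder all the way down into $\mathbb{Z}$. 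The two cases of $\omega$ (according to $d$ modulo $4$) should be recorded separately, but they reduce to the same computation.
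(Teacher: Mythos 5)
Your argument is correct, and it reaches the same key intermediate fact as the paper — that the coprimality of the $\omega$-coefficient of $n$ with $|n|^{2}$ forces every residue class modulo $n$ to contain a rational integer, which one then reduces modulo $|n|^{2}=n\bar{n}$ (an odd multiple of $n$ in $\mathbb{D}[d]$) to a symmetric representative — but you get there by a noticeably different and cleaner route. The paper begins with a Euclidean division $x=nq'+r'$ and then uses B\'ezout ($1=sb+t|n|^{2}$) to choose $m$ so that $r'+mn$ has $\omega$-component divisible by $|n|^{2}$, finally absorbing the term $k|n|^{2}=kn\bar{n}$ into the quotient; it is an explicit coordinate computation that hands you $q=q'-m+k\bar{n}\pm\bar{n}$ directly. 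You instead prove surjectivity of $\mathbb{Z}\to\mathbb{D}[d]/(n)$ once and for all by showing $\sqrt{d}$ (hence $\omega$, since $2$ is invertible) is congruent to a rational integer, using the trace and the relation $n-\bar{n}=c\sqrt{d}$ together with the invertibility of $c$ modulo $|n|^{2}$. The main thing your version buys is that the Euclidean hypothesis is never used: the paper advertises that this proposition ``heavily uses the nature of a Euclidean domain,'' but your proof shows the statement holds in the ring of integers of any imaginary quadratic field, which is worth noting given the authors' closing conjectures about non-Euclidean class-number-one rings. What the paper's version buys is constructiveness — an explicit $q$ and $r$ from a single division — whereas yours would need to unwind the inverses of $2$ and $c$ to extract them. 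One small point to keep straight in a final write-up: the ``nonreal component'' in the hypothesis is the coefficient $t$ of $\omega$ (as the paper's own applications confirm), and your reduction from $\gcd(t,|n|^{2})=1$ to $\gcd(c,|n|^{2})=1$ correctly uses the oddness of $|n|^{2}$ in the case $c=2t$.
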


\begin{proof}
\ 

\noindent First note that $\mathbb{D}[d]$  is a Euclidean domain.  This means that for $x,n \in \mathbb{D}[d]$  there exist $q',r' \in \mathbb{D}[d]$  such that $x = nq' + r'$  where $|r'| < |n|$. \\

\noindent Also note that because $\mathbb{D}[d]$  is the ring of integers of a norm-Euclidean quadratic field, that $|n|^{2}$  will be an integer. \\ 

\noindent Our first goal is to show that there exists $m \in \mathbb{Z}$  so that $r' + mn$  has complex component that is a multiple of $|n|^{2}$. \\

\noindent Let the complex component of $r'$  be $a \in \mathbb{Z}$  and the nonreal component of $n$  be $b \in \mathbb{Z}$.  \\

\noindent Because $b$  and $|n|^{2}$  are relatively prime, we see that there exists $s,t \in \mathbb{Z}$  such that $1 = sb + t|n|^{2}$. \\

\noindent If we let $m = -as$  we see that 

\begin{align*}
a +mb &= a - asb \\
&= a - a(1 - t|n|^{2}) \\
&= a t |n|^{2}.
\end{align*}

\noindent This shows that the complex component of $r' +mn$  is a multiple of $|n|^{2}$.  So if the real component of $r'+mn$  is greater than $|n|^{2}$  when $m = -as$  we see that we can express it as $r + m' |n|^{2}$  where $r, m' \in \mathbb{Z}$  and $|r| < |n|^{2}$.  Define $k \in \mathbb{D}[d]$  as a number with real component $m'$  and complex component $at$. \\

\noindent We see now that 

\begin{align*}
x &= nq' + r' \\
&=n(q' - m) + (r' + mn) \\
&=n(q' - m) + (r + k |n|^{2}) \\
&= n(q' - m + k\bar{n}) + r.  
\end{align*}

\ 

\noindent If we let $q = (q' - m + k\bar{n})$, then we see that $x = nq + r$  where $r$  is an integer such that $|r| < |n|^{2}$. \\

\noindent Because $|n|^{2}$  is odd, we see that if $|r| > (|n|^{2} - 1) \slash 2$, then  $|n|^{2} - |r| \leq (|n|^{2} - 1) \slash 2$.  We can let $q = (q' - m + k\bar{n}) \pm \bar{n}$  in the appropriate scenario and rename $r$  so without loss of generality we can assume that $|r| \leq (|n|^{2} - 1) \slash 2$.
\end{proof}

Define the function for $n \in \mathbb{D}[d]$, $q_{n}: \mathbb{D}[d] \times \mathbb{D}[d] \rightarrow \mathbb{D}[d]$  such that $q_{n}(a,b) = na + b$.  We can use this function to reduce the amount of work in our method further through symmetry.  For example, we first found that if $n = 4m +r$, then we need to find decompositions as in (\ref{esconj}) for a few possible remainders $r$.  We can find decompositions quite easily for some values of $r$.  For other values of $r$  we must divide $m$  by $r$ to derive possible scenarios for these new remainders and we must find decompositions for all of these possible remainders.  The decompositions for some of these possible scenarios after the second division are redundant.  The following proposition accounts for the redundancies.

\begin{proposition} \label{supporting4}

Suppose that $n,r \in \mathbb{D}[d] \backslash \{ 0 \}$.  If there exists a decomposition as in (\ref{esconj}) for $4 \slash p(q_{n}(b,r),-n)$  for all $b \in \mathbb{D}[d]$, then there exists a decomposition as in (\ref{esconj}) for $4 \slash p(q_{n}(b,ur), -n)$  for all $b \in \mathbb{D}[d]$  and units $u \in \mathbb{D}[d]$.
\end{proposition}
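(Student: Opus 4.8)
The plan is to imitate the proof of Proposition \ref{supporting1} almost verbatim, now twisting the inner argument $r$ of $q_n$ rather than the outer remainder of $p$. I would fix units $u_1, u_2 \in \mathbb{D}[d]$ with $u_1 u_2 = 1$ and write the target unit as $u = u_1$, so that it suffices to produce, for every $b$, a decomposition as in (\ref{esconj}) of $4/p(q_n(b, u_1 r), -n)$ out of the assumed decompositions of $4/p(q_n(b, r), -n)$. Letting $u_1$ range over all units then yields the full conclusion.

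First I would exploit the universal quantifier over $b$ in the hypothesis. Since we are given a decomposition of $4/p(q_n(b,r), -n)$ for \emph{all} $b \in \mathbb{D}[d]$, and $b \mapsto u_2 b$ is a bijection of $\mathbb{D}[d]$, there exist $x', y', z' \in \mathbb{D}[d]$ with $4/p(q_n(u_2 b, r), -n) = 1/x' + 1/y' + 1/z'$ for every $b$. Next I would record the algebraic identity playing the role of $p(u_1 u_2 a, u_1 b) = u_1 p(u_2 a, b)$ in Proposition \ref{supporting1}. Using $q_n(b, u_1 r) = nb + u_1 r = u_1(n u_2 b + r) = u_1 q_n(u_2 b, r)$ together with $u_1 u_2 = 1$, one computes
\begin{align*}
p(q_n(b, u_1 r), -n) &= 4\, q_n(b, u_1 r) - n \\
&= 4 u_1\, q_n(u_2 b, r) - n \\
&= u_1\bigl(4\, q_n(u_2 b, r) - u_2 n\bigr) \\
&= u_1\, p(q_n(u_2 b, r), -u_2 n).
\end{align*}
Factoring the unit $u_1$ out of the denominator then turns any decomposition of $4/p(q_n(u_2 b, r), -u_2 n)$ into a decomposition of $4/p(q_n(b, u_1 r), -n)$ by replacing each denominator $w$ with $u_1 w$, exactly as in Proposition \ref{supporting1}.

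The step I expect to be the main obstacle is reconciling the remainder $-u_2 n$ produced by this identity with the remainder $-n$ supplied by the hypothesis: the factorisation unavoidably twists $-n$ into its associate $-u_2 n$, whereas the relabelled decompositions above are literally for the $p$-remainder $-n$. I would close this gap by transporting decompositions across the associate remainders $-n$ and $-u_2 n = u_2(-n)$ using the unit-symmetry already isolated in Proposition \ref{supporting1}, observing that as $b$ varies the first argument $q_n(u_2 b, r)$ sweeps out the full coset $r + (n)$. The delicate point, and the one where I would spend the most care, is that Proposition \ref{supporting1} is stated with the first argument ranging over \emph{all} of $\mathbb{D}[d]$, while here it ranges only over a coset; I would therefore need to verify that the unit twist of the $p$-remainder is still legitimate on this coset. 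Once that transport is justified, the chain of equalities above delivers the desired decomposition of $4/p(q_n(b, u_1 r), -n)$ for every $b$ and every unit $u = u_1$.
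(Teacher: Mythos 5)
Your proposal is essentially the paper's own proof: the same identity $p(q_{n}(b,u_{1}r),-n) = u_{1}\,p(q_{n}(u_{2}b,r),-u_{2}n)$, followed by the same appeal to Proposition \ref{supporting1} to move the second argument from $-n$ to its associate $-u_{2}n$, and then a division of each denominator by the unit. The ``delicate point'' you flag --- that Proposition \ref{supporting1} is being invoked with its first argument ranging only over the coset $r+(n)$ rather than over all of $\mathbb{D}[d]$ --- is exactly the step the paper performs without comment, so your hesitation there is if anything more scrupulous than the published argument.
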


\begin{proof}
\ 

\noindent Let $n,r \in \mathbb{D}[d] \backslash \{ 0 \}$.  Let $u_{1}, u_{2} \in \mathbb{D}[d]$  be units such that $u_{1}u_{2} = 1$.  For all $b \in \mathbb{D}[d]$  suppose that there exists $x,y,z \in \mathbb{D}[d]$  such that $$ \frac{4}{p(q_{n}(b,r), -n)} = \frac{1}{x} + \frac{1}{y} + \frac{1}{z}.$$

\ 

\noindent Proposition \ref{supporting1}  tells us that for all $b \in \mathbb{D}[d]$ there exists $x',y',z' \in \mathbb{D}[d]$  such that $$ \frac{4}{p(q_{n}(b,r), -u_{2}n)} = \frac{1}{x'} + \frac{1}{y'} + \frac{1}{z'}.$$ 

\ 

\noindent This implies that there exists $x'',y'',z'' \in \mathbb{D}[d]$  for any $b \in \mathbb{D}[d]$  so that $$ \frac{4}{p(q_{n}(u_{2}b,r),-u_{2}n)} = \frac{1}{x''} + \frac{1}{y''} + \frac{1}{z''}. $$

\ 

\noindent Notice then that

\begin{align*}
\frac{4}{p(q_{n}(b,u_{1}r),-n)} &= \frac{4}{p(q_{n}(u_{1}u_{2}b,u_{1}r), -u_{1}u_{2}n)} \\
&= \frac{4}{u_{1} p(q_{n}(u_{2}b, r), -u_{2}n)} \\
&= \frac{1}{u_{1}x''} + \frac{1}{u_{1}y''} + \frac{1}{u_{1}z''}.
\end{align*}
\end{proof}

At this point, each ring will use the properties intrinsic to the ring to find the decompositions.  We mentioned earlier that some decompositions were quite simple and the decompositions in the following proposition show that there are some simple decompositions that have the same basic pattern across all the possible rings in this section. 

\begin{proposition} \label{supporting5}
For $a,b \in \mathbb{D}[d] \backslash \{ 0 \}$ and any $n \in \mathbb{D}[d]$

\begin{align}
\frac{4}{p(a,-1)} &= \frac{1}{a} + \frac{1}{a \cdot p(a,-1)} \\
\frac{4}{p(q_{n}(b,1), -n)} &= \frac{1}{q_{n}(b,1)} + \frac{1}{p(q_{n}(b,1), -n) \cdot b} - \frac{1}{p(q_{n}(b,1), -n) \cdot q_{n}(b,1) \cdot b}.
\end{align}
\end{proposition}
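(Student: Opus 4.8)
The plan is to verify both displayed lines directly as formal identities in the field of fractions of $\mathbb{D}[d]$. Since $\mathbb{D}[d]$ is a commutative integral domain, each line is simply an equality of rational expressions that holds as soon as every denominator occurring is a nonzero element of $\mathbb{D}[d]$; no structural property of $\mathbb{D}[d]$ beyond being a domain is needed, and everything reduces to clearing denominators. Under the stated hypotheses $a,b \in \mathbb{D}[d] \backslash \{ 0 \}$ (together with the tacit requirement that the left-hand denominators be nonzero, which is exactly what makes $4/p(a,-1)$ and $4/p(q_{n}(b,1),-n)$ meaningful), this is the case. The first preparatory step is to unwind the notation: recalling $p(a,b) = 4a+b$ and $q_{n}(a,b) = na+b$, we have $p(a,-1) = 4a-1$, $q_{n}(b,1) = nb+1$, and $p(q_{n}(b,1),-n) = 4(nb+1)-n$.

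For the first identity I would place the right-hand side over the common denominator $a\,p(a,-1)$ and use $p(a,-1)+1 = 4a$:
\[
\frac{1}{a} + \frac{1}{a\,p(a,-1)} = \frac{p(a,-1)+1}{a\,p(a,-1)} = \frac{4a}{a(4a-1)} = \frac{4}{4a-1} = \frac{4}{p(a,-1)}.
\]
I would also note in passing that this gives only a two-term decomposition, which is a special case of the three-term shape required in (\ref{esconj}).

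For the second identity I would abbreviate $Q = q_{n}(b,1) = nb+1$ and $P = p(Q,-n) = 4Q-n$, and put the three right-hand terms over the common denominator $P\,Q\,b$; the numerator then collapses to $Pb + Q - 1$. The key is to feed in the two relations built into the definitions, namely $Q-1 = nb$ and $Pb = (4Q-n)b = 4Qb - nb$. Adding these, the contributions $-nb$ and $+nb$ cancel, leaving numerator $4Qb$, so the expression becomes $4Qb/(PQb) = 4/P = 4/p(q_{n}(b,1),-n)$, as claimed.

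The computation is entirely routine, so there is no genuine obstacle; the only thing to watch is the bookkeeping. The one mildly conceptual point is the exact cancellation in the second identity: the term $-nb$ arising from $Pb$ is precisely annihilated by $Q-1 = nb$, which is what forces the $n$-dependent part of the numerator to vanish and the whole sum to collapse to $4/P$. Recognizing that this cancellation is engineered by the choice $Q = nb+1$ (so that the first unit fraction is $1/Q$ and the remaining two supply a small correction) is the real content; confirming it is a one-line algebraic check.
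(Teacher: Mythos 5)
Your proposal is correct and is essentially the paper's own argument: a direct algebraic verification using $p(a,-1)+1=4a$ and $q_n(b,1)-1=nb$, with the paper merely running the computation in the opposite direction (peeling unit fractions off the left-hand side rather than combining the right-hand side over a common denominator). No substantive difference.
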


\begin{proof}
\ 

\noindent First we see that

\begin{align*}
\frac{4}{p(a,-1)} &= \frac{4}{4a-1} \\
&= \frac{1}{a} + \frac{1}{a \cdot (4a-1)} \\
&= \frac{1}{a} + \frac{1}{a \cdot p(a,-1)}.
\end{align*}

\ 

\noindent Next we see that

\begin{align*}
\frac{4}{p(q_{n}(b,1),-n)} &= \frac{4}{4(nb+1) - n} \\
&= \frac{1}{nb+1} + \frac{n}{(4(nb+1) - n ) \cdot (nb+1)} \\
&= \frac{1}{nb+1} + \frac{nb +1 - 1}{(4(nb+1) -n) \cdot (nb+1) \cdot b} \\
&= \frac{1}{nb+1} + \frac{1}{(4(nb+1) -n) \cdot b} - \frac{1}{(4(nb+1) - n) \cdot (nb+1) \cdot b} \\
&=\frac{1}{q_{n}(b,1)} + \frac{1}{p(q_{n}(b,1), -n) \cdot b} - \frac{1}{p(q_{n}(b,1), -n) \cdot q_{n}(b,1) \cdot b}.
\end{align*}
\end{proof}

\begin{figure} 
\vspace{-5cm}
\begin{center} 
\adjustbox{trim={0.085\width} {0.0\height} {0.0\width} {0.0\height}, clip}{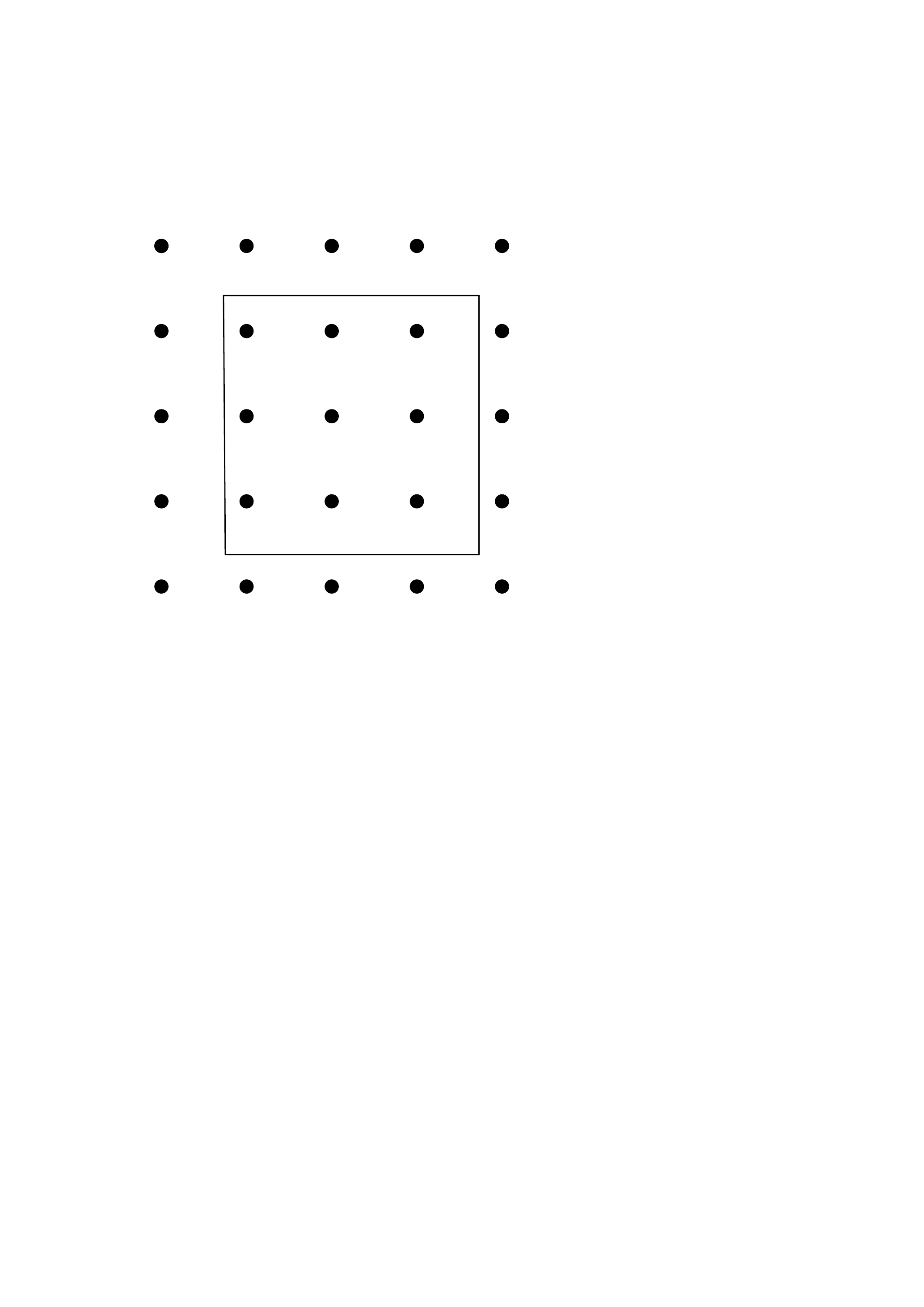}
\end{center}
\vspace{-15cm}
\caption{This represents the Gaussian integer lattice $\mathbb{Z}[\omega]$  when $\omega = i = \sqrt{-1}$.  Though it is not to scale, it can also represent the lattice for $\mathbb{Z}[\omega]$  when $\omega = \sqrt{-2}$.  In both cases, the lattice points inside the square are precisely the points in $\mathcal{E}_{-1}$  and $\mathcal{E}_{-2}$  respectively.}
\label{Square12}
\end{figure}

At this point we put into action our general methodology for finding decompositions.  While this method is similar for each value in (\ref{negativenumbers}), we will see that the decompositions for each ring is unique.  We begin by considering the Gaussian integers.  To make the notation similar for each ring we let $\omega = i = \sqrt{-1}$.  For $a+b\omega \in \mathbb{Z}[\omega]$, $|a+b\omega|^{2} = a^{2} + b^{2}$.  Let $\mathcal{E}_{-1} = \{ n \in \mathbb{Z}[\omega] : |n|^{2} \leq 2 \}$.

\begin{theorem}
There exists a decomposition similar to (\ref{esconj}) for every element in $\mathbb{D}[-1] \backslash \mathcal{E}_{-1}$.
\end{theorem}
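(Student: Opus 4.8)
The plan is to specialize the division machinery of Propositions~\ref{supporting6}--\ref{supporting5} to $\mathbb{Z}[\omega]$ with $\omega=i$, whose unit group is $\{1,-1,i,-i\}$ and whose only prime above $2$ is $1+i$. By Proposition~\ref{supporting6} every $n$ is congruent mod $4$ to one of the sixteen remainders $m+ni$ with $m,n\in\{-2,-1,0,1,2\}$; writing $n=p(m,r)=4m+r$ and applying Proposition~\ref{supporting1} (the four units) together with Proposition~\ref{supporting2} (conjugation) collapses these into a short list of representatives. I would then split into two regimes according to the parity of $|r|^{2}$: the remainders of even norm are exactly those with $r$ divisible by $1+i$ (so that $n$ is divisible by $1+i$), while the remainders of odd norm form the unit orbit of $-1$ (norm $1$) together with the unit orbit of $2+i$ (norm $5$).

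For $n$ divisible by $1+i$ I would factor out the entire power of the prime, writing $n=u(1+i)^{k}M$ with $u$ a unit, $k=v_{1+i}(n)\ge 1$, and $M$ of odd norm. If $M$ is not a unit then $|M|^{2}\ge 5$, so $M$ lies outside the exceptional set and the odd-norm analysis supplies $4/M=\sum 1/a_{i}$; scaling every denominator by $u(1+i)^{k}$ yields $4/n=\sum 1/\!\left(u(1+i)^{k}a_{i}\right)$ with all denominators in $\mathbb{Z}[\omega]$. The only remaining subcase is $M$ a unit, i.e.\ $n$ an associate of $(1+i)^{k}$; here the identity $4=-(1+i)^{4}$ shows that $4/n$ is, up to a unit, $(1+i)^{4-k}$, which is a single unit fraction once $k\ge 4$ and, for the finitely many $k\le 3$, one of a few values of norm at most $4$ to be written down by hand.

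The odd-norm regime carries the real content. The orbit of $-1$ is immediate from Proposition~\ref{supporting5}(1). For the orbit of $2+i$ I would take the representative $r=2+i$ (legitimate by Proposition~\ref{supporting1}), for which $|r|^{2}=5$ is odd and the imaginary part of $r$ is coprime to $5$, so Proposition~\ref{supporting3} permits a \emph{single} further division $m=rb+s$ with $s\in\mathbb{Z}$ and $|s|\le 2$. When $s=\pm1$ the number $n$ matches, up to the unit reductions of Propositions~\ref{supporting1} and~\ref{supporting4}, the template $p(q_{r}(b,1),-r)$ decomposed in Proposition~\ref{supporting5}(2); when $s=0$ one has $n=r\,p(b,1)$, which factors so that the rational-integer identity $4/(4b+1)=1/b-1/(b(4b+1))$ can be scaled by $1/r$; and the two values $s=\pm2$ are closed by explicit parametric identities. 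Because $s$ is a rational integer of absolute value at most $2$, no second descent is needed and the analysis terminates at once.

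The step I expect to be the genuine obstacle is this $2+i$ orbit, since it is the only remainder class where the initial splitting $4/n=1/m-r/(nm)$ fails to resolve directly and the Euclidean descent governed by Proposition~\ref{supporting3} is forced; matching the secondary remainder to the template of Proposition~\ref{supporting5}(2) and hand-building the $s=\pm2$ identities is where the work concentrates. What then remains is to track the exceptional set: each template is valid only when its denominators are nonzero (for instance $m+1\neq 0$ or $b\neq 0$), which expels finitely many small $n$, and for every expelled value -- the norm-$4$ and norm-$5$ elements such as $\pm2,\ \pm2i,\ \pm2\pm i,\ \pm1\pm2i$, the last already decomposed as $4/(1+2i)=1/i+1/(1+i)+1/(3+i)$ in the introduction -- I would record an explicit decomposition by hand. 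Verifying that every value forced out of the templates has $|n|^{2}\le 2$, hence lies in $\mathcal{E}_{-1}$, completes the proof.
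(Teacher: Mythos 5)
Your plan follows the paper's proof almost step for step: divide by $4$, use Propositions \ref{supporting1} and \ref{supporting2} to collapse the sixteen remainders of Proposition \ref{supporting6} down to the class of $-1$ and the norm-five class, dispose of $-1$ by Proposition \ref{supporting5}, and attack the norm-five class by a second Euclidean division (Proposition \ref{supporting3}) whose secondary remainders $0,\pm 1$ are absorbed by Propositions \ref{supporting4} and \ref{supporting5} and whose remainder $2$ requires the one hand-built identity. Your explicit factorization $n=u(1+i)^{k}M$ is a cleaner version of the paper's remark that the even remainders ``do not generate prime numbers,'' and you correctly locate the real work in the secondary remainder $\pm 2$ (the paper's identity there is $4/p=1/q+1/(p\cdot s)-1/(\omega\, p\, q\, s)$ with $s=(q+\omega)/(1-2\omega)$).

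The genuine problem is your closing bookkeeping. You assert that every value forced out of the templates has $|n|^{2}\le 2$ and hence lies in $\mathcal{E}_{-1}$; that is false, and it contradicts your own list, which already contains norm-four and norm-five elements. More seriously, the list is incomplete. The second identity of Proposition \ref{supporting5} carries $b$ in a denominator, so the secondary remainder $s=\pm 1$ with quotient $b=0$ is also expelled; in the paper's normalization that case is $n=\pm 4-(1-2\omega)$, i.e.\ $3+2\omega$ and $-5+2\omega$ (together with their orbits under units and conjugation), of norms $13$ and $29$. These primes are nowhere near $\mathcal{E}_{-1}$, are not reachable by any other choice of representative, and are covered by none of your templates or explicit decompositions; they need their own identities, for instance
$$\frac{4}{3+2\omega}=\frac{1}{1+\omega}+\frac{1}{3+2\omega}+\frac{1}{5-\omega},\qquad \frac{4}{5+2\omega}=\frac{1}{1+\omega}+\frac{1}{2-5\omega}+\frac{1}{7-3\omega}.$$
(To be fair, the published proof silently skips exactly this case as well.) Until you enumerate, for each template, the parameter values that annihilate a denominator, compute the resulting $n$, and exhibit a decomposition for each such $n$ outside $\mathcal{E}_{-1}$, the argument is not complete.
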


\begin{proof}
\ 

\noindent If we divide the denominator by $4$, then proposition \ref{supporting6}  tells us that we have sixteen possible remainders.  Because all associates of the prime divisors of $2$  are elements of $\mathcal{E}_{-1}$, we see that eight remainders do not generate prime numbers.  These remainders are $0, 2,  2\omega , 2+2\omega, 1+\omega, 1-\omega, -1+\omega, -1-\omega$. \\

\noindent Due to propositions \ref{supporting1}  and \ref{supporting2}  we see that of the left-over remainders it suffices to find a decomposition for remainders $-1,-(1 - 2\omega)$.  This is because four remainders are associates of $-1$  and four remainders are associates of $-(1 - 2\omega)$.  \\

\noindent Proposition \ref{supporting5}  finds a decomposition when the remainder is $-1$. \\

\noindent Because $|1-2\omega|^{2} = 5$  and $(2,5) =1$, we can use proposition \ref{supporting3} to suggest that for $x+y\omega \in \mathbb{Z}[\omega]$  there exists $c+d\omega \in \mathbb{Z}[\omega]$  and $r \in \{ -2, -1, 0 , 1, 2 \}$  so that $x+y\omega = (1-2\omega)(c+d\omega) + r$.  Let $q_{1-2\omega} \left( (c+d\omega), r \right) = (1-2\omega)(c+d\omega) + r.$ \\

\noindent Notice that $p \left( q_{1-2\omega} \left( (c+d\omega), 0 \right), -(1-2\omega) \right)$  is not a prime number except when $c+d\omega = 0$, but in this case $$\frac{4}{-1+2\omega} = \frac{1}{\omega} + \frac{1}{-1+\omega} + \frac{1}{-3+\omega}.$$

\ 

\noindent Proposition \ref{supporting4} tells us that it suffices to find a solution for $r \in \{1,2 \}$. \\

\noindent Proposition \ref{supporting5}  finds a decomposition when $r=1$. \\

\noindent Define 
\begin{align*}
s_{1-2\omega,2} \left( q_{1-2\omega} \left( (c+d\omega), 2 \right) \right) &= \frac{q_{1-2\omega} \left( (c+d\omega), 2 \right) + \omega}{1-2\omega}\\
&= c+(d+1)\omega.
\end{align*}

\ 

\noindent If we write

\begin{align*}
p &= p \left( q_{1-2\omega} \left( (c+d\omega),2 \right), -(1-2\omega) \right) \\
q_{1-2\omega} &= q_{1-2\omega} \left( (c+d\omega),2 \right) \\
s_{1-2\omega,2} &= s_{1-2\omega,2} \left( q_{1-2\omega} \left( (c+d\omega),2 \right) \right)
\end{align*}

\ 

\noindent we see that

\begin{align*}
\frac{4}{p} &= \frac{1}{q_{1-2\omega}} + \frac{1-2\omega}{p \cdot q_{1-2\omega}} \\
&= \frac{1}{q_{1-2\omega}} + \frac{(1-2\omega) \cdot s_{1-2\omega,2}}{p \cdot q_{1-2\omega} \cdot s_{1-2\omega,2}} \\
&= \frac{1}{q_{1-2\omega}} + \frac{q_{1-2\omega} + \omega}{p \cdot q_{1-2\omega} \cdot s_{1-2\omega,2}} \\
&= \frac{1}{q_{1-2\omega}} + \frac{1}{p \cdot s_{1-2\omega,2}} - \frac{1}{\omega \cdot p \cdot q_{1-2\omega} \cdot s_{1-2\omega,2}}.
\end{align*}

\ 

We only need to mention the following decomposition because the prime of which it is a power was is $\mathcal{E}_{-1}$:

\begin{align*}
\frac{4}{(1+ \omega)^{2}} &= \frac{1}{\omega} + \frac{1}{2\omega} + \frac{1}{2\omega}.
\end{align*}
\end{proof}

Next we denote $\omega = \sqrt{-2}$.  For $a+b\omega \in \mathbb{Z}[\omega]$, $|a+b\omega|^{2} = a^{2} +2 b^{2}$.  Let $\mathcal{E}_{-2} = \{ n \in \mathbb{Z}[\omega] : |n|^{2} \leq 3 \}$.

\begin{theorem}
There exists a decomposition similar to (\ref{esconj}) for every element in $\mathbb{D}[-2] \backslash \mathcal{E}_{-2}$.
\end{theorem}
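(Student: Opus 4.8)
The plan is to run the same pipeline used for $\mathbb{D}[-1]$, adapting every step to $\omega = \sqrt{-2}$, where the unit group collapses to $\{1,-1\}$ and the norm form is $|a+b\omega|^2 = a^2 + 2b^2$. First I would invoke Proposition \ref{supporting6}: dividing by $4$ leaves sixteen remainders, and a remainder is a multiple of the ramified prime $\omega$ (the only prime divisor of $2$, since $2 = -\omega^2$) exactly when its real part is even. This discards the eight remainders with even real part as non-prime-generating, the only associates of $\omega$ among them, $\pm\omega$, lying in $\mathcal{E}_{-2}$. Applying Propositions \ref{supporting1} and \ref{supporting2} to the eight surviving (odd real part) remainders, the orbits under $\{1,-1\}$ and conjugation are $\{1,-1\}$, $\{\pm1\pm\omega\}$, and $\{\pm1+2\omega\}$, so it suffices to treat the representatives $-1$, $-(1-\omega)$, and $-(1-2\omega)$, of norms $1$, $3$, and $9$.

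The remainder $-1$ is settled immediately by Proposition \ref{supporting5}. For $-(1-\omega)$ I would note that $|1-\omega|^2 = 3$ is odd with nonreal part $-1$ coprime to $3$, so Proposition \ref{supporting3} divides the first quotient by $1-\omega$ with integer remainder $r' \in \{-1,0,1\}$, which Proposition \ref{supporting4} reduces to $r' \in \{0,1\}$. Here $r'=1$ is Proposition \ref{supporting5}, while $r'=0$ gives $n = (1-\omega)(4q-1)$, which is either the excluded element $-1+\omega \in \mathcal{E}_{-2}$ (when the quotient $q$ vanishes) or a product whose cofactor $4q-1 = p(q,-1)$ is decomposed by Proposition \ref{supporting5} and then multiplied through by $\tfrac{1}{1-\omega}$. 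Thus the norm-$3$ orbit needs no new construction.

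The norm-$9$ orbit $-(1-2\omega)$ is the crux. Since $|1-2\omega|^2 = 9$ is odd with nonreal part $-2$ coprime to $9$, Proposition \ref{supporting3} divides the first quotient by $1-2\omega$ with $r' \in \{-4,\dots,4\}$, reduced by Proposition \ref{supporting4} to $r' \in \{0,1,2,3,4\}$. The cases $r'=0$ and $r'=1$ behave as above (the quotient-zero instance of $r'=0$ being the prime power $(1+\omega)^2 = -1+2\omega$, handled explicitly below), and $r'=3$ forces $1+\omega \mid n$, which is impossible for a prime outside $\mathcal{E}_{-2}$ and reducible otherwise. The genuinely hard sub-cases are $r'=2$ and $r'=4$, each requiring a bespoke splitting of the residual term $\tfrac{1-2\omega}{nm}$ from the initial decomposition (\ref{initialdecomp}) into unit fractions. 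I expect this to be the main obstacle. The Gaussian construction defined an auxiliary $s$ with $(1-2\omega)s = q_{1-2\omega} + \omega$ and used the correction $\tfrac{\omega}{\,\cdot\,} = -\tfrac{1}{\omega\,\cdot\,}$, which is valid only because $\omega = i$ is a unit with $\omega^2 = -1$. Here $\omega = \sqrt{-2}$ is not a unit, so the correction element must be drawn from $\{1,-1\}$ (or the numerator split into two correction terms), and one must exhibit an explicit auxiliary multiplier rendering $q_{1-2\omega}+\alpha$ divisible by $1-2\omega$ while keeping every resulting denominator in the ring; producing such a multiplier separately for $r'=2$ and for $r'=4$ is where the real work lies.

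Finally, because $\mathcal{E}_{-2}$ now contains the norm-$3$ primes $1\pm\omega$ as well as $\omega$, the reduction ``handle primes, products follow'' leaves a residue of composite numbers all of whose prime factors are exceptional. I would close these out with explicit decompositions for the minimal such products lying outside $\mathcal{E}_{-2}$ — namely $\omega^2 = -2$, $\omega(1+\omega) = -2+\omega$, and $(1+\omega)^2 = -1+2\omega$, together with their associates and conjugates — and observe that every larger product of exceptional primes reduces to one of these by peeling off a single exceptional prime $p$ and scaling the known decomposition of the cofactor $n/p$ by the unit fraction $\tfrac1p$.
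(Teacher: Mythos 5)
Your outline tracks the paper's own strategy almost exactly---same use of Propositions \ref{supporting6}, \ref{supporting1}, \ref{supporting2} to cut the sixteen residues mod $4$ down to three representatives of norms $1$, $3$, and $9$ (you take $-1$, $-(1-\omega)$, $-(1-2\omega)$; the paper takes the conjugate representatives $-1$, $-(1+\omega)$, $-(1+2\omega)$, which is immaterial by Proposition \ref{supporting2}), same dispatch of the norm-$1$ and norm-$3$ orbits via Propositions \ref{supporting3}, \ref{supporting4}, \ref{supporting5}, and the same identification of the residues $r=2$ and $r=4$ modulo the norm-$9$ remainder as the hard cases. But at exactly that point you stop: you correctly diagnose that the Gaussian trick of absorbing the leftover numerator into the unit $\omega=i$ is unavailable here, and then you announce that ``producing such a multiplier \ldots is where the real work lies'' without producing it. That construction \emph{is} the theorem; everything before it is bookkeeping shared with the $d=-1$ case. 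The paper closes the gap with two explicit auxiliary elements: for $r=2$ it takes $s = \bigl(p+1\bigr)\slash(1+2\omega) = 4(c+d\omega)-2\omega$ (divisibility holds because $p+1 \equiv 9-(1+2\omega) = 8-2\omega \equiv 0 \pmod{1+2\omega}$, as $|1+2\omega|^2=9$), yielding $\frac{4}{p} = \frac{1}{q} + \frac{1}{q\cdot s} + \frac{1}{p\cdot q\cdot s}$, and for $r=4$ it takes $s = \bigl(p\cdot q - 1\bigr)\slash(1+2\omega)$, yielding $\frac{4}{p} = \frac{1}{q} + \frac{1}{s} - \frac{1}{p\cdot q\cdot s}$. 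Without exhibiting these (or equivalents for your conjugate representatives) and verifying the divisibility in each case, the proof is not complete.

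A secondary, smaller gap: your closing list of products of exceptional primes omits $3 = (1+\omega)(1-\omega)$. It is not an associate or conjugate of $\omega^2$, $\omega(1+\omega)$, or $(1+\omega)^2$, and your ``peel off one exceptional prime'' recursion fails for it because the cofactor $1-\omega$ lies in $\mathcal{E}_{-2}$ and has no decomposition to scale. The paper supplies $\frac{4}{(1-\omega)(1+\omega)} = \frac{1}{2}+\frac{1}{2}+\frac{1}{3}$ explicitly; you need this (or an equivalent) as well.
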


\begin{proof}
\ 

\noindent If we divide the denominator by $4$, proposition \ref{supporting6}  tells us that we have sixteen possible remainders.  Because all the associates of the prime divisors of $2$  are elements of $\mathcal{E}_{-2}$, we see that eight remainders do not generate prime numbers.These remainders are $0, \omega, 2\omega, -\omega, 2, 2 + \omega, 2 + 2\omega, 2 - \omega$.  \\

\noindent Due to propositions \ref{supporting1}  and \ref{supporting2}  we see that of the left-over remainders it suffices to find a decomposition for remainders $-1, -(1+\omega), -(1 + 2\omega)$. This is because two remainders are associates of $-1$, two remainders are associates of $-(1+\omega)$, two remainders are associates of the conjugate of $-(1+\omega)$  and two remainders are associates of $-(1 + 2\omega)$. \\

\noindent \noindent Proposition \ref{supporting5}  finds a decomposition when the remainder is $-1$. \\

\noindent We first find a decomposition for remainder $-(1+ \omega)$.  Because $|1+\omega|^{2} = 3$  and $(1,3) = 1$, we can use proposition \ref{supporting3}  to suggest that for $x+y\omega \in \mathbb{Z}[\omega]$  there exists $c+d\omega \in \mathbb{Z}[\omega]$  and $r \in \{ -1, 0 , 1 \}$  so that $x+y\omega = (1+\omega)(c+d\omega) + r$.  Let $q_{1+\omega} \left( (c+d\omega), r \right) = (1+\omega)(c+d\omega) + r.$ \\

\noindent Notice that $p \left( q_{1+\omega} \left( (c+d\omega), 0 \right), -(1+\omega) \right)$  is not a prime number except when $c+d\omega = 0$, but this prime number is an element of $\mathcal{E}_{-2}$. \\

\noindent Proposition \ref{supporting4} tells us that it suffices to find a solution for $r=1$  and proposition \ref{supporting5}  finds a decomposition when $r=1$. \\

\noindent Next we find a decomposition for remainder $-(1+2\omega)$.  Because $|1+2\omega|^{2}=9$  and $(2,9)=1$, we can use proposition \ref{supporting3}  again to suggest that for $x+y\omega \in \mathbb{Z}[\omega]$  there exists $c+d\omega \in \mathbb{Z}[\omega]$  and $r \in \{ -4, -3, -2, -1, 0 , 1, 2, 3, 4 \}$  so that $x+y\omega = (1+2\omega)(c+d\omega) +r$.  Let $q_{1+2\omega} \left( (c+d\omega), r \right) = (1+2\omega)(c+d\omega) + r.$ \\

\noindent Because $(1+2\omega) = -1 \cdot (1-\omega)^{2}$  and $3 = (1-\omega)\cdot (1 + \omega)$  we see \linebreak $p \left( q_{1+2\omega} \left( (c+d\omega), 0 \right), -(1+2\omega) \right)$  and $p \left( q_{1+2\omega} \left( (c+d\omega), \pm 3 \right), -(1+2\omega) \right)$  are not prime numbers. \\

\noindent Proposition \ref{supporting4} tells us that it suffices to find a solution for $r \in \{ 1, 2, 4 \}$. \\

\noindent Proposition \ref{supporting5}  finds a decomposition when $r=1$. \\

\noindent Define 

\begin{align*}
s_{1+2\omega,2} \left( q_{1+2\omega} \left( (c+d\omega), 2 \right) \right) &= \frac{p \left( q_{1+2\omega} \left( (c+d\omega), 2 \right), -(1+2\omega) \right) + 1}{1+2\omega} \\
&= 4(c+d\omega) - 2\omega 
\end{align*}
\begin{align*}
s_{1+2\omega,4} &\left( q_{1+2\omega} \left( (c+d\omega), 4 \right) \right) \\
&= \frac{ p \left( q_{1+2\omega} \left( (c+d\omega), 4 \right), -(1+2\omega) \right) \cdot q_{1+2\omega} \left( (c+d\omega), 4 \right) - 1}{1+2\omega} \\
&= (c+d\omega) \left( 4 \left( (1+2\omega)(c+d\omega) + 8 \right) - (1+2\omega) \right) + (3 - 14\omega).
\end{align*}

\ 

\noindent If we write 
\begin{align*}
p &= p \left( q_{1+2\omega} \left( (c+d\omega), 2 \right), -(1+2\omega) \right) \\
q_{1+2\omega} &= q_{1+2\omega} \left( (c+d\omega), 2 \right) \\  
s_{1+2\omega,2} &= s_{1+2\omega,2} \left( q_{1+2\omega} \left( (c+d\omega), 2 \right) \right)
\end{align*}

\ 

\noindent we see that 
\begin{align*} 
\frac{4}{p} &= \frac{1}{q_{1+2\omega}} + \frac{1+2\omega}{p \cdot q_{1+2\omega}} \\
&= \frac{1}{q_{1+2\omega}} + \frac{(1+2\omega) \cdot s_{1+2\omega,2} }{p \cdot q_{1+2\omega} \cdot s_{1+2\omega,2}} \\
&= \frac{1}{q_{1+2\omega}} + \frac{p + 1}{p \cdot q_{1+2\omega} \cdot s_{1+2\omega,2}} \\
&= \frac{1}{q_{1+2\omega}} + \frac{1}{q_{1+2\omega} \cdot s_{1+2\omega,2}} + \frac{1}{p \cdot q_{1+2\omega} \cdot s_{1+2\omega,2}}.
\end{align*}

\ 

\noindent If we write 
\begin{align*}
p &= p \left( q_{1+2\omega} \left( (c+d\omega), 4 \right), -(1+2\omega) \right) \\
q_{1+2\omega} &= q_{1+2\omega} \left( (c+d\omega), 4\right) \\  
s_{1+2\omega,4} &= s_{1+2\omega,4} \left( q_{1+2\omega} \left( (c+d\omega), 4 \right) \right)
\end{align*}

\ 

\noindent we see that 
\begin{align*} 
\frac{4}{p} &= \frac{1}{q_{1+2\omega}} + \frac{1+2\omega}{p \cdot q_{1+2\omega}} \\
&= \frac{1}{q_{1+2\omega}} + \frac{(1+2\omega) \cdot s_{1+2\omega,4} }{p \cdot q_{1+2\omega} \cdot s_{1+2\omega,4}} \\
&= \frac{1}{q_{1+2\omega}} + \frac{p \cdot q_{1+2\omega} - 1}{p \cdot q_{1+2\omega} \cdot s_{1+2\omega,4}} \\
&= \frac{1}{q_{1+2\omega}} + \frac{1}{ s_{1+2\omega,4}} - \frac{1}{p \cdot q_{1+2\omega} \cdot s_{1+2\omega,4}}.
\end{align*}

\ 

\noindent We only need to mention the following decompositions because the primes of which they are products of were in $\mathcal{E}_{-2}$:

\begin{align*}
\frac{4}{\omega^{2}} &= \frac{1}{-1} + \frac{1}{-2} + \frac{1}{-2} \\
\frac{4}{(1+\omega)^{2}} &= \frac{1}{\omega} + \frac{1}{-2+\omega} + \frac{1}{-1+2\omega} \\
\frac{4}{(1-\omega)(1+\omega)} &= \frac{1}{2} + \frac{1}{2} + \frac{1}{3} \\
\frac{4}{\omega(1+\omega)} &= \frac{1}{-1} + \frac{1}{\omega} + \frac{1}{-2+\omega}.
\end{align*}
\end{proof}
\ 

\begin{figure} 
\vspace{-2cm}
\adjustbox{trim={0.07\width} {0.0\height} {0.0\width} {0.0\height}, clip}{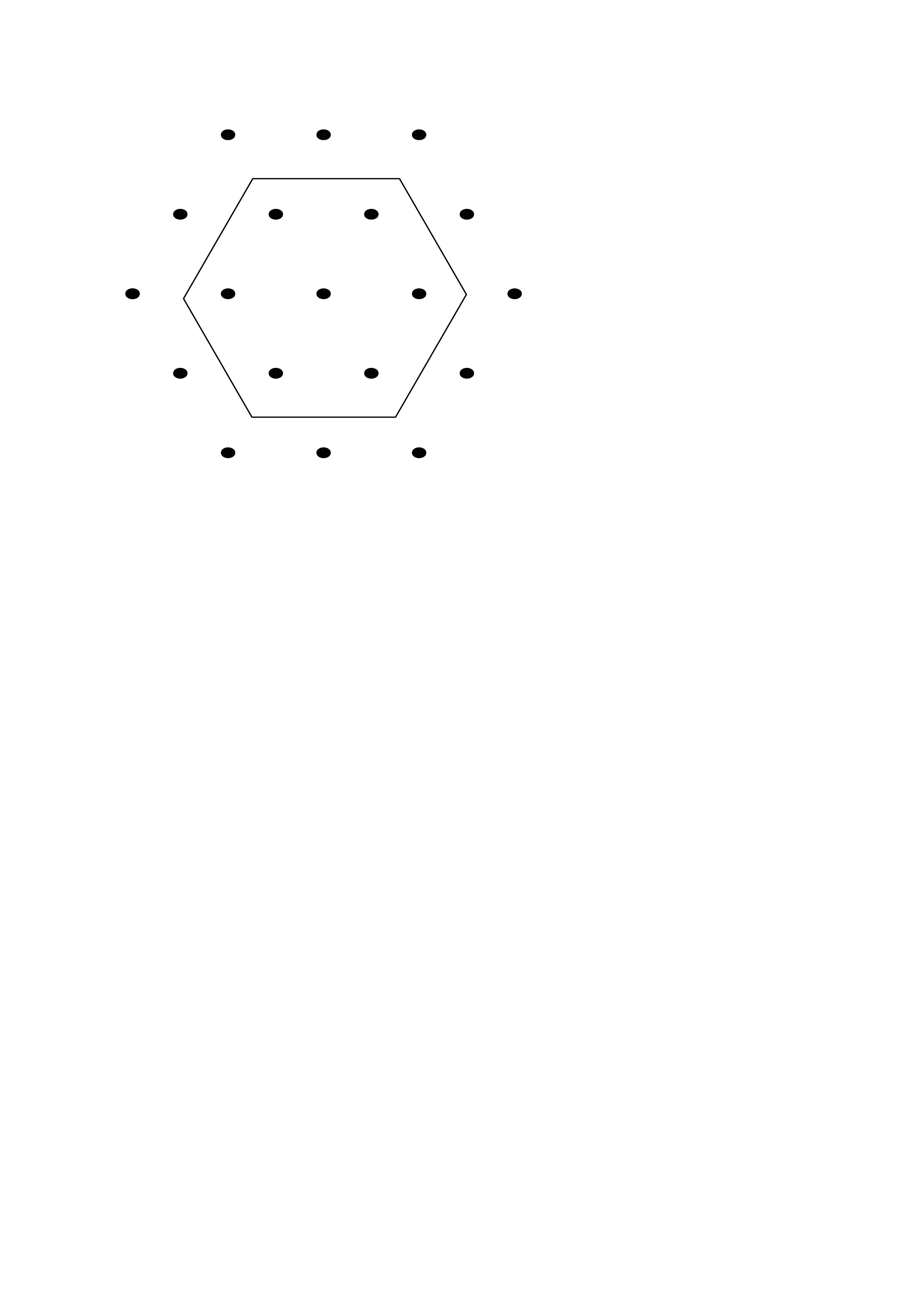}
\vspace{-18.5cm}
\caption{This represents the the Eisenstein integer lattice $\mathbb{Z}[\omega]$  when $\omega = (1 \slash 2) + (\sqrt{-3} \slash 2)$.  Though it is not to scale, it can also represent the lattice for $\mathbb{Z}[\omega]$  when $\omega = (1 \slash 2) + (\sqrt{-7} \slash 2)$.  In both cases, the lattice points inside the hexagon are precisely the points in $\mathcal{E}_{-3}$  and $\mathcal{E}_{-7}$  respectively.}
\label{Triangle37}
\end{figure}

For the previous two theorems we can see that the basic pattern of the lattice are similar, i.e. square and rectangles.  Figure \ref{Square12}  shows that the exceptional sets are the same basic shape as well.  Now we address the rings that have a triangular lattice.  We define $\omega = (1 \slash 2) + ( \sqrt{-3}  \slash 2)$.  For $a+b\omega \in \mathbb{Z}[\omega]$, $|a+b\omega|^{2} = a^{2} + ab + b^{2}$.  Let $\mathcal{E}_{-3} = \{ n \in \mathbb{Z}[\omega] : |n|^{2} \leq 1 \}$. 

\begin{theorem}
There exists a decomposition similar to (\ref{esconj}) for every element in $\mathbb{D}[-3] \backslash \mathcal{E}_{-3}$.
\end{theorem}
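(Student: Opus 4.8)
The plan is to follow the same template as the two preceding theorems, exploiting the two features that distinguish the $d=-3$ lattice. First, the prime $2$ is \emph{inert} in $\mathbb{Z}[\omega]$ (no element has norm $2$ since $a^{2}+ab+b^{2}=2$ has no solution), so the only prime divisor of $2$ is $2$ itself. Second, $\mathcal{E}_{-3}=\{\,n:|n|^{2}\le 1\,\}$ consists only of $0$ and the six units, hence contains \emph{no} primes. The latter fact means that, unlike for $\mathcal{E}_{-1}$ and $\mathcal{E}_{-2}$, there are no exceptional primes whose powers must be treated separately: since $\mathbb{D}[-3]$ is a unique factorization domain, every non-unit $n\notin\mathcal{E}_{-3}$ has a prime factor $\pi$, and a decomposition of $4/\pi$ yields one of $4/n$ after multiplying through by $n/\pi$. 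So it suffices to decompose $4/\pi$ for each prime $\pi$, organized by the remainder of $\pi$ modulo $4$.

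First I would divide by $4$ and invoke Proposition \ref{supporting6}, giving the sixteen remainders $j+k\omega$ with $j,k\in\{-2,-1,0,1,2\}$. Because $2$ is inert, the remainders that are multiples of a prime divisor of $2$ are just the \emph{four} classes $0,\,2,\,2\omega,\,2+2\omega$ (not eight as before); by Proposition \ref{supporting6} each produces a composite number unless it is an associate of $2$. The composite cases reduce to a prime factor as above, the pure powers $2^{k}$ reduce to the identity for $4/2$, and the associates of $2$ follow from $4/2$ via Proposition \ref{supporting1}. Hence the only even-remainder input needing a direct decomposition is $4/2$, for which $4/2 = 1/1 + 1/2 + 1/2$.

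Next I would handle the twelve remainders coprime to $2$. Multiplication by the six units (Proposition \ref{supporting1}) together with conjugation (Proposition \ref{supporting2}) acts on $\mathbb{Z}[\omega]/(4)$, and a short orbit computation should show that these twelve classes fall into exactly two orbits: the six units (associates of $-1$) and the six associates and conjugates of $1+\omega$, the ramified prime of norm $3$. It therefore suffices to decompose $4/p(a,-1)$ and $4/p(a,-(1+\omega))$ for all $a$. The first is immediate from the first identity of Proposition \ref{supporting5}. For the second I would set $\pi=1+\omega$; since $|\pi|^{2}=3$ is odd and its $\omega$-coefficient $1$ is coprime to $3$, Proposition \ref{supporting3} writes the first-level quotient as $\pi(c+d\omega)+r$ with $r\in\{-1,0,1\}$. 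Proposition \ref{supporting4} collapses $r=\pm1$ to the single case $r=1$, which the second identity of Proposition \ref{supporting5} resolves. The case $r=0$ gives $p(q_{1+\omega}(c+d\omega,0),-(1+\omega))=(1+\omega)\bigl(4(c+d\omega)-1\bigr)$, which is composite with a cofactor of remainder $-1$ (hence decomposable) except when $c+d\omega=0$, where it degenerates to the prime $-(1+\omega)$. Unlike the $d=-2$ case this prime is \emph{not} in $\mathcal{E}_{-3}$, so it requires one explicit base decomposition of $4/(1+\omega)$ (equivalently $4/\sqrt{-3}$), from which all associates and the conjugate follow by Propositions \ref{supporting1} and \ref{supporting2}.

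I expect the only genuine obstacles to be verifying the orbit computation that reduces the twelve coprime remainders to precisely $-1$ and $-(1+\omega)$, and exhibiting the two explicit base decompositions $4/2$ and $4/(1+\omega)$ while checking that both land in $\mathbb{Z}[\omega]$. Pleasantly, because division by the norm-$3$ prime $1+\omega$ produces remainders only in $\{-1,0,1\}$, no auxiliary $s$-function of the kind needed for $d=-1$ and $d=-2$ is required, which should make this the most economical of the three negative-norm cases.
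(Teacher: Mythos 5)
Your proposal is correct and follows essentially the same route as the paper: reduce to the sixteen residues mod $4$ via Proposition \ref{supporting6}, collapse the twelve odd residues to $-1$ and $-(1+\omega)$ by Propositions \ref{supporting1} and \ref{supporting2}, dispatch $-1$ and the $r=1$ subcase by Proposition \ref{supporting5} after applying Propositions \ref{supporting3} and \ref{supporting4} to the norm-$3$ prime, and supply the two base identities $4/2=1/1+1/2+1/2$ and $4/(1+\omega)=1/1+1/\omega+1/(1+\omega)$ (the latter being the one explicit decomposition you left to be exhibited).
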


\begin{proof}
\ 

\noindent If we divide the denominator by $4$, proposition \ref{supporting6} tells us that we have sixteen possible remainders.  All associates of $2$, which is a prime number in this ring, are explained by the following decomposition $$ \frac{4}{2} = \frac{1}{1} + \frac{1}{2} + \frac{1}{2}. $$

\ 

\noindent Accounting for this exception, we see that four remainders do not generate prime numbers.  These remainders are $0, 2\omega, 2, 2 + 2\omega$. \\

\noindent Due to propositions \ref{supporting1}  and \ref{supporting2} we see that of the left-over remainders it suffices to find a decomposition for remainders $-1$ and $-(1 +\omega)$.  This is because six remainders are associates of $-1$  and six remainders are associates of $-(1+\omega)$. \\

\noindent Proposition \ref{supporting5} finds a decomposition when the remainder is $-1$. \\

\noindent Because $|1+\omega|^{2} = 3$  and $(1,3) = 1$, we can use proposition \ref{supporting3} to suggest that for $x+y\omega \in \mathbb{Z}[\omega]$  there exists $c+d\omega \in \mathbb{Z}[\omega]$  and $r \in \{ -1, 0 , 1 \}$  so that $x+y\omega = (1+ \omega)(c+d\omega)+r$.  Let $q_{1+\omega} \left( (c+d\omega), r \right) = (1+\omega)(c+d\omega) + r.$ \\

\noindent Notice that $p \left( q_{1+\omega} \left( (c+d\omega), 0 \right), -(1+\omega) \right)$  is not a prime number except when $c+d\omega = 0$, but in this case $$\frac{4}{1+\omega} = \frac{1}{1} + \frac{1}{\omega} + \frac{1}{1+\omega}.$$

\ 

\noindent Proposition \ref{supporting4} tells us that it suffices to find a solution for $r=1$  and proposition \ref{supporting5}  finds a decomposition when $r=1$.
\end{proof}
\ 

We now let $\omega = (1 \slash 2) + ( \sqrt{-7}  \slash 2)$.  For $a+b\omega \in \mathbb{Z}[\omega]$, $|a+b\omega|^{2} = a^{2} + ab + 2b^{2}$.  Let $\mathcal{E}_{-7} = \{ n \in \mathbb{Z}[\omega] : |n|^{2} \leq 2 \}$. 

\begin{theorem}
There exists a decomposition similar to (\ref{esconj}) for every element in $\mathbb{D}[-7] \backslash \mathcal{E}_{-7}$.
\end{theorem}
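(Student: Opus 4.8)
The plan is to reuse the template that already worked for $\mathbb{D}[-1]$ and $\mathbb{D}[-2]$, now with the arithmetic of $\mathbb{Z}[\omega]$, $\omega=(1+\sqrt{-7})/2$, $|a+b\omega|^{2}=a^{2}+ab+2b^{2}$. First I would divide the denominator by $4$; by Proposition \ref{supporting6} there are sixteen remainders $m+k\omega$ with $m,k\in\{-2,-1,0,1,2\}$. Since $|m+k\omega|^{2}\equiv m(m+k)\pmod 2$, precisely four remainders have odd norm, namely $1,-1,1+2\omega,-1+2\omega$, while the other twelve have even norm and are therefore divisible by one of the prime divisors $\omega,\bar\omega=1-\omega$ of $2$ (here $2=\omega\bar\omega$ splits); by Proposition \ref{supporting6} the latter never produce primes outside $\mathcal{E}_{-7}$. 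The units are only $\pm1$, and $-1\cdot(-1+2\omega)=1-2\omega\equiv 1+2\omega\pmod 4$, so Propositions \ref{supporting1} and \ref{supporting2} collapse the four odd-norm remainders to the two representatives $-1$ and $-1+2\omega$. The first identity of Proposition \ref{supporting5} settles the remainder $-1$ at once.

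For the remainder $-1+2\omega=-\pi$, $\pi=1-2\omega$, I note $|\pi|^{2}=7$ is odd, $|\pi|\neq1$, and $(2,7)=1$, so Proposition \ref{supporting3} lets me divide the quotient $m$ by $\pi$ with an integer remainder $r'$, $|r'|\le 3$. Proposition \ref{supporting4} then reduces $r'\in\{-3,\dots,3\}$ to $r'\in\{0,1,2,3\}$. When $r'=0$ we have $\pi\mid n$, so $n$ is composite and reduces, via unique factorization, to the prime factor $\pi$, except when the quotient is $0$, where $n=-\pi=-1+2\omega$ is the ramified prime above $7$ and needs an explicit decomposition. The case $r'=1$ is exactly the second identity of Proposition \ref{supporting5}.

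The crux is $r'=2$ and $r'=3$, and everything hinges on $\pi^{2}=-7$, so that $\pi\mid 7$. Writing $p=p(q_{\pi}((c+d\omega),r'),-\pi)$ and $q_{\pi}=q_{\pi}((c+d\omega),r')$, the relations $p=4q_{\pi}-\pi$ and $q_{\pi}=\pi(c+d\omega)+r'$ give $p\equiv 4r'$ and $q_{\pi}\equiv r'\pmod{\pi}$. For $r'=2$ this yields $p\equiv 8\equiv 1\pmod{\pi}$, hence $\pi\mid(p-1)$; putting $s=(p-1)/\pi\in\mathbb{Z}[\omega]$, so that $\pi s=p-1$, I would split
\[
\frac{4}{p}=\frac{1}{q_{\pi}}+\frac{\pi}{p\,q_{\pi}}=\frac{1}{q_{\pi}}+\frac{p-1}{p\,q_{\pi}\,s}=\frac{1}{q_{\pi}}+\frac{1}{q_{\pi}\,s}-\frac{1}{p\,q_{\pi}\,s}.
\]
For $r'=3$ I get $p\,q_{\pi}\equiv 4\cdot 3^{2}=36\equiv 1\pmod{\pi}$, hence $\pi\mid(p\,q_{\pi}-1)$; putting $s=(p\,q_{\pi}-1)/\pi$, so that $\pi s=p\,q_{\pi}-1$, gives
\[
\frac{4}{p}=\frac{1}{q_{\pi}}+\frac{\pi}{p\,q_{\pi}}=\frac{1}{q_{\pi}}+\frac{p\,q_{\pi}-1}{p\,q_{\pi}\,s}=\frac{1}{q_{\pi}}+\frac{1}{s}-\frac{1}{p\,q_{\pi}\,s}.
\]
I expect this to be the main obstacle: in each case one must check that $s$ genuinely lies in $\mathbb{Z}[\omega]$ and is nonzero and that no denominator degenerates. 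The two tricks are available precisely because $\pi\mid(4\cdot2-1)=7$ and $\pi\mid(4\cdot3^{2}-1)=35=5\cdot 7$, and confirming that $\{1,2,3\}$ is fully covered in this way is the heart of the argument.

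Finally I would list the finitely many decompositions whose denominators are built solely from primes of $\mathcal{E}_{-7}$, which the divide-by-$4$ reduction cannot reach directly: explicit formulas for $4/\omega^{2}$ and $4/2$, with $4/\bar\omega^{2}$ obtained by conjugation through Proposition \ref{supporting2}, together with the explicit decomposition of $4/(-1+2\omega)$ forced by the $r'=0$ prime. Any $n$ divisible by $\omega$ or $\bar\omega$ is then handled by factoring out these primes until one reaches either a factor coprime to $2$, which has an odd-norm remainder and is treated by the construction above, or an element all of whose prime factors lie in $\mathcal{E}_{-7}$, which reduces to one of the listed base cases.
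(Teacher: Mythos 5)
Your proposal follows the paper's proof essentially step for step: the same reduction of the sixteen residues mod $4$ to the representatives $-1$ and $-(1-2\omega)$, the same second division by $\pi=1-2\omega$ with $|r'|\le 3$, and for $r'=2,3$ exactly the paper's auxiliary elements $s=(p-1)/\pi$ and $s=(pq-1)/\pi$ with the identical three-term splittings (your justification via $\pi^2=-7$, so $\pi\mid 7$ and $\pi\mid 35$, is just a cleaner explanation of why the paper's explicit formulas for $s_{1-2\omega,2}$ and $s_{1-2\omega,3}$ land in $\mathbb{Z}[\omega]$). The only difference is that you defer rather than exhibit the finitely many explicit identities (for $4/(-1+2\omega)$, $4/\omega^2$, $4/(1-\omega)^2$, $4/2$), which the paper writes out; this is a routine finite check, so the argument is correct and essentially the same.
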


\begin{proof}
\ 

\noindent If we divide the denominator by $4$, proposition \ref{supporting6}  tells us that we have sixteen possible remainders.  Because all associates of the prime divisors of $2$  are elements of $\mathcal{E}_{-7}$, we see that twelve remainders do not generate prime numbers.  These remainders are $0, \omega, 2\omega, -\omega, 1+ \omega, 1 - \omega, 2, -2 + \omega, 2 + 2\omega, 2 - \omega, -1 + \omega, -1 - \omega$. \\

\noindent Due to propositions \ref{supporting1}  and \ref{supporting2}  we see that of the remaining scenarios it suffices to find a decomposition for remainders $-1$  and $-(1 - 2\omega)$.  This is because two remainders are associates of $-1$  and two remainders are associates of $-(1-2\omega)$. \\

\noindent Proposition \ref{supporting5}  finds a decomposition when the remainder is $-1$. \\

\noindent Because $|1-2\omega|^{2} = 7$  and $(2,7)=1$  we can use proposition \ref{supporting3}  to suggest that for $x+y\omega \in \mathbb{Z}[\omega]$  there exists $c+d\omega \in \mathbb{Z}[\omega]$  and $r \in \{ -3, -2, -1, 0 , 1, 2, 3 \}$  so that $x+y\omega = (1-2\omega)(c+d\omega) + r$.  Let $q_{1-2\omega} \left( (c+d\omega), r \right) = (1-2\omega)(c+d\omega) + r.$ \\

\noindent Notice that $p \left( q_{1-2\omega} \left( (c+d\omega), 0 \right), -(1-2\omega) \right)$  is not a prime number except when $c+d\omega = 0$, but in this case $$\frac{4}{-1+2\omega} = \frac{1}{\omega} + \frac{1}{-1+\omega} + \frac{1}{-2+4\omega}.$$ 

\ 

\noindent Proposition \ref{supporting4}  tells us that it suffices to find a solution for $r \in \{ 1, 2, 3 \}$. \\

\noindent Proposition \ref{supporting5}  finds a decomposition when $r=1$. \\

\noindent Define 

\begin{align*}
s_{1-2\omega,2} \left( q_{1-2\omega} \left( (c+d\omega), 2 \right) \right) &= \frac{p \left( q_{\omega} \left( (c+d\omega), 2 \right), -(1-2\omega) \right) - 1}{1-2\omega} \\
&= 4(c+d\omega) +(-2 + 2\omega) 
\end{align*}
\begin{align*}
s_{1-2\omega,3} &\left( q_{1-2\omega} \left( (c+d\omega), 3 \right) \right) \\
&= \frac{p \left( q_{1-2\omega} \left( (c+d\omega), 3 \right), -(1-2\omega) \right) \cdot q_{1-2\omega} \left( (c+d\omega), 3 \right) - 1}{1-2\omega} \\
&= (c+d\omega) \left( 4 \left( (1-2\omega)(c+d\omega) +6 \right) - (1-2\omega) \right)- (8 - 10\omega).
\end{align*}

\ 

\noindent If we write 
\begin{align*}
p &= p \left( q_{1-2\omega} \left( (c+d\omega), 2 \right), -(1-2\omega) \right) \\
q_{1-2\omega} &= q_{1-2\omega} \left( (c+d\omega), 2 \right) \\  
s_{1-2\omega,2} &= s_{1-2\omega,2} \left( q_{1-2\omega} \left( (c+d\omega), 2 \right) \right)
\end{align*}

\ 

\noindent we see that 
\begin{align*} 
\frac{4}{p} &= \frac{1}{q_{1-2\omega}} + \frac{1-2\omega}{p \cdot q_{1-2\omega}} \\
&= \frac{1}{q_{1-2\omega}} + \frac{(1-2\omega) \cdot s_{1-2\omega,2} }{p \cdot q_{1-2\omega} \cdot s_{1-2\omega,2}} \\
&= \frac{1}{q_{1-2\omega}} + \frac{p - 1}{p \cdot q_{1-2\omega} \cdot s_{1-2\omega,2}} \\
&= \frac{1}{q_{1-2\omega}} + \frac{1}{q_{1-2\omega} \cdot s_{1-2\omega,2}} - \frac{1}{p \cdot q_{1-2\omega} \cdot s_{1-2\omega,2}}.
\end{align*}

\ 

\noindent If we write 
\begin{align*}
p &= p \left( q_{1-2\omega} \left( (c+d\omega), 3 \right), -(1-2\omega) \right) \\
q_{1-2\omega} &= q_{1-2\omega} \left( (c+d\omega), 3 \right) \\  
s_{1-2\omega, 3} &= s_{1-2\omega, 3} \left( q_{1-2\omega} \left( (c+d\omega), 3 \right) \right)
\end{align*}

\ 

\noindent we see that 
\begin{align*} 
\frac{4}{p} &= \frac{1}{q_{1-2\omega}} + \frac{1-2\omega}{p \cdot q_{1-2\omega}} \\
&= \frac{1}{q_{1-2\omega}} + \frac{(1-2\omega) \cdot s_{1-2\omega,3} }{p \cdot q_{1-2\omega} \cdot s_{1-2\omega,3}} \\
&= \frac{1}{q_{1-2\omega}} + \frac{p \cdot q_{1-2\omega} - 1}{p \cdot q_{1-2\omega} \cdot s_{1-2\omega,3}} \\
&= \frac{1}{q_{1-2\omega}} + \frac{1}{ s_{1-2\omega,3}} - \frac{1}{p \cdot q_{1-2\omega} \cdot s_{1-2\omega,3}}.
\end{align*}

\ 

\noindent We only need to mention the following decompositions because the primes of which the are products were not in $\mathcal{E}_{-7}$. \\

\begin{align*}
\frac{4}{\omega^{2}} &= \frac{1}{-1} + \frac{1}{-1+\omega} + \frac{1}{-1+\omega} \\
\frac{4}{(1-\omega)^{2}} &= \frac{1}{-1} + \frac{1}{-\omega} + \frac{1}{-\omega} \\
\frac{4}{\omega(1-\omega)} &= \frac{1}{1} + \frac{1}{2} + \frac{1}{2}.
\end{align*}

\end{proof}

\ 

\begin{figure} 
\vspace{-2cm}
\adjustbox{trim = {0.055\width} {0.0\height} {0.0\width} {0.0\height}, clip}{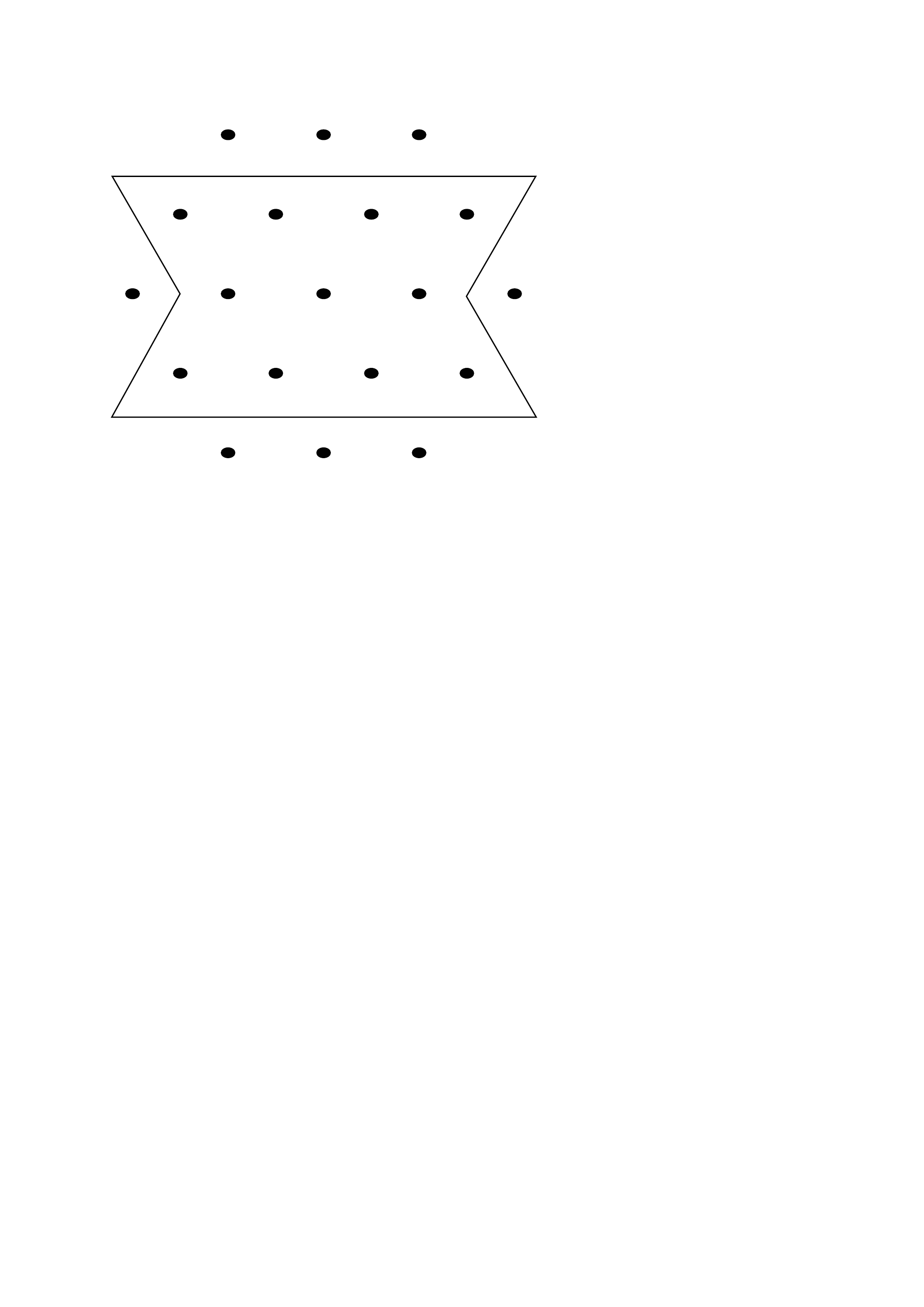}
\vspace{-18.5cm}
\caption{Though it is not to scale, this represents the lattice for $\mathbb{Z}[\omega]$  where $\omega = (1 \slash 2) + ( \sqrt{-11} \slash 2)$.  Lattice points inside the bounded region are precisely the points in $\mathcal{E}_{-11}$.}
\label{Triangle11}
\end{figure}

Much like the first two rings we considered, the previous two rings had a similar lattice and figure \ref{Triangle37} tells us that the exceptional sets have the same basic shape as well.  The next ring is unique in many ways.  We similarly define $\omega = (1 \slash 2) + ( \sqrt{-11}  \slash 2)$.  For $a+b\omega \in \mathbb{Z}[\omega]$, $|a+b\omega|^{2} = a^{2} + ab + 3b^{2}$.  Let $\mathcal{E}_{-11} = \{ n \in \mathbb{Z}[\omega] : |n|^{2} \leq 5 \} \backslash \{ 2, -2\}$. 

\begin{theorem}
There exists a decomposition similar to (\ref{esconj}) for every element in $\mathbb{D}[-11] \backslash \mathcal{E}_{-11}$.
\end{theorem}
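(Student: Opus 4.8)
The plan is to re-run the residue-reduction machinery already used for $d\in\{-1,-2,-3,-7\}$, exploiting that $-11\equiv 5\pmod 8$, so that $2$ is inert, i.e.\ a prime of norm $4$, exactly as in the case $d=-3$. First I would apply Proposition \ref{supporting6} to list the sixteen residues modulo $4$. Since $2$ is prime, a residue $m+n\omega$ yields a composite precisely when both coordinates are even, so the four residues $0,2,2\omega,2+2\omega$ account for all numbers divisible by $2$; every such $n$ with nonzero quotient is handled by $\frac{4}{2}=\frac11+\frac12+\frac12$ and its negation, and this single identity must be recorded precisely because $\pm 2\notin\mathcal{E}_{-11}$. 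The remaining twelve residues split, under the unit action $u=\pm1$ (Proposition \ref{supporting1}) and conjugation $\omega\mapsto 1-\omega$ (Proposition \ref{supporting2}), into four orbits of sizes $2,4,4,2$ with representatives $-1$, $-\omega$, $-(1+\omega)$, $-(1-2\omega)$, whose associated primes have norms $1,3,5,11$.

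Next I would clear the three light representatives. The class $-1$ is immediate from the first identity of Proposition \ref{supporting5}. For $-\omega$ (norm $3$, $(1,3)=1$) and $-(1+\omega)$ (norm $5$, $(1,5)=1$) Proposition \ref{supporting3} lets me divide the quotient by the corresponding prime, giving sub-remainders $r'\in\{-1,0,1\}$ and $r'\in\{-2,\dots,2\}$ respectively, which Proposition \ref{supporting4} collapses to $r'\ge 0$. In both branches $r'=0$ forces $\pi\mid n$, so $n$ is composite unless the quotient vanishes, in which case $n\in\{-\omega,-(1+\omega)\}\subset\mathcal{E}_{-11}$ and nothing is needed; the case $r'=1$ is the second identity of Proposition \ref{supporting5}; and the single leftover $r'=2$ in the norm-$5$ branch is dispatched by the quadratic auxiliary function $s_{1+\omega,2}$ built so that $(1+\omega)\,s_{1+\omega,2}=p\cdot q_{1+\omega}-1$, which is legitimate since $5\mid 4\cdot 2^2-1$.

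The substance of the theorem lies in the representative $-(1-2\omega)$, where $1-2\omega=-\sqrt{-11}$ has norm $11$. Dividing the quotient by $1-2\omega$ (Proposition \ref{supporting3} applies since $11$ is odd and $(2,11)=1$) gives sub-remainders reduced to $r'\in\{0,1,2,3,4,5\}$, and none of $r'\in\{2,3,4,5\}$ is forced composite because $1-2\omega\nmid 4r'$ and $2\nmid n$. Here $r'=0$ produces, at a vanishing quotient, the ramified prime $-(1-2\omega)=-1+2\omega$ of norm $11>5$, which demands the standalone decomposition
\begin{equation*}
\frac{4}{-1+2\omega}=\frac{1}{\omega}+\frac{1}{-1+\omega}+\frac{1}{-3+6\omega};
\end{equation*}
$r'=1$ is again Proposition \ref{supporting5}; $r'=3$ admits the linear $s_{1-2\omega,3}$ with $(1-2\omega)\,s=p-1$ (valid since $11\mid 4\cdot3-1$); and $r'=5$ admits the quadratic $s_{1-2\omega,5}$ with $(1-2\omega)\,s=p\cdot q_{1-2\omega}-1$ (valid since $11\mid 4\cdot 5^2-1$). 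Finally I would record the decompositions of the heavy composites whose prime factors all lie in $\mathcal{E}_{-11}$, namely representatives (modulo units and conjugation) of the products of two exceptional primes $\omega^2,\ 3=\omega(1-\omega),\ \omega(1+\omega),\ \omega(2-\omega),\ (1+\omega)^2,\ 5=(1+\omega)(2-\omega)$, e.g.\ $\frac43=\frac11+\frac14+\frac1{12}$ and $\frac45=\frac12+\frac14+\frac1{20}$; any larger such composite reduces to one of these after peeling off a further factor.

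The hard part will be the sub-remainders $r'=2$ and $r'=4$ of the norm-$11$ branch. The linear target $(1-2\omega)\,s=p\pm1$ requires $11\mid 4r'\mp1$, and the quadratic target $(1-2\omega)\,s=p\cdot q_{1-2\omega}\pm1$ requires $11\mid 4r'^2\mp1$; one checks directly that neither congruence holds for $r'\in\{2,4\}$. Worse, when the intermediate quotients $p$ and $q_{1-2\omega}$ are coprime primes the residual fraction $\frac{1-2\omega}{p\,q_{1-2\omega}}$ admits \emph{no} two-term unit-fraction representation at all: the characterization $\bigl((1-2\omega)X-N\bigr)\bigl((1-2\omega)Y-N\bigr)=N^2$ with $N=p\,q_{1-2\omega}$ has, modulo $1-2\omega$, no admissible divisor other than the degenerate one forcing $X=0$. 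Consequently the first unit fraction cannot be taken to be $\frac{1}{q_{1-2\omega}}$, and these two classes, which is exactly what makes $d=-11$ genuinely different from the earlier rings, will have to be resolved by a separate device, most plausibly a further descent that sub-divides the quotient $c+d\omega$ modulo a convenient prime so that all but finitely many sub-cases become composite while the survivors receive bespoke identities. Bringing $r'=2$ and $r'=4$ under control is the crux of the entire argument.
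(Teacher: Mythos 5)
Most of your outline tracks the paper: the sixteen residues, the orbit count $2,4,4,2$ under units and conjugation, the $-1$, $-\omega$, and $-(1+\omega)$ branches (including the quadratic auxiliary $s_{1+\omega,2}$ for the sub-remainder $2$ of the norm-$5$ prime), and the closing list of products of two exceptional primes all agree with what the authors do. But your proof does not close, and you say so yourself: the families $r'=2$ and $r'=4$ in the last residue class are left unresolved, so what you have is a correct reduction to an open sub-problem, not a proof.

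The missing idea is a different choice of divisor inside that last residue class. You divide the quotient by the ramified prime $1-2\omega=-\sqrt{-11}$ of norm $11$, which leaves sub-remainders $r'\in\{1,\dots,5\}$ with no forced compositeness, and you correctly check that $r'=2,4$ satisfy neither $11\mid 4r'\mp1$ nor $11\mid 4r'^{2}\mp1$, so the linear and quadratic devices both fail. The paper instead notes that $1+2\omega\equiv 1-2\omega\pmod 4$ and divides by the \emph{composite} representative $1+2\omega=-(1-\omega)(2-\omega)$ of norm $15$. This buys two things simultaneously. First, since $3=\omega(1-\omega)$ and $5=(1+\omega)(2-\omega)$, any sub-remainder $r$ divisible by $3$ or $5$ makes $(1+2\omega)\bigl(4(c+d\omega)-1\bigr)+4r$ divisible by $1-\omega$ or $2-\omega$, so $r\in\{0,\pm3,\pm5,\pm6\}$ drop out as composite. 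Second, the survivors $r\in\{1,2,4,7\}$ now face congruences modulo an element of norm $15$ rather than $11$: $4\cdot2^{2}-1=15$, $4\cdot4-1=15$, and $4\cdot7^{2}-1=195=13\cdot15$ are all divisible by $1+2\omega$, so $r=2$ and $r=7$ yield to the quadratic device $(1+2\omega)s=p\cdot q-1$, $r=4$ to the linear device $(1+2\omega)s=p-1$, and $r=1$ to Proposition \ref{supporting5}. So the ``separate device'' you call for is not a further descent on $c+d\omega$ but a re-parametrization of the same residue class by a composite of larger norm; without it, the crux cases you identify remain unproved.
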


\begin{proof}
\ 

\noindent If we divide the denominator by $4$, proposition \ref{supporting6}  tells us that we have sixteen possible remainders.  All associates of $2$, which is a prime number in this ring, are explained by the following decomposition $$ \frac{4}{2} = \frac{1}{1} + \frac{1}{2} + \frac{1}{2}.$$

\ 

\noindent Accounting for this exception, we see that four remainders do not generate prime numbers.  These remainders are $0, 2\omega, 2, 2 + 2\omega$.  \\

\noindent Due to propositions \ref{supporting1}  and \ref{supporting2} we see that of the remaining scenarios it suffices to find a decomposition for remainders $-1,-\omega,-(1+\omega),-(1 + 2\omega)$.  This is because two remainders are associates of $-1$, two remainders are associates of $-\omega$, two remainders are associates of the conjugate of $-\omega$, two remainders are associates of $-(1+\omega)$, two remainders are associates of the conjugate of $-(1+\omega)$  and two remainders are associates of $-(1+2\omega)$.  \\

\noindent Proposition \ref{supporting5}  finds a decomposition when the remainder is $-1$. \\ 

\noindent We first find a decomposition for remainder $-\omega$.  Because $|\omega|^{2}=3$  and $(1,3)=1$, we can use proposition \ref{supporting3}  to suggest that for $x+y\omega \in \mathbb{Z}[\omega]$  there exists $c+d\omega \in \mathbb{Z}[\omega]$  and $r \in \{ -1, 0 , 1 \}$  so that $x+y\omega = \omega(c+d\omega) + r$.  Let $q_{\omega} \left( (c+d\omega), r \right) = \omega(c+d\omega) + r.$ \\

\noindent Notice that $p \left( q_{\omega} \left( (c+d\omega), 0 \right), -\omega \right)$  is not a prime number except when $c+d\omega = 0$, but this prime number is an element of $\mathcal{E}_{-11}$. \\

\noindent Proposition \ref{supporting4}  tells us that it suffices to find a solution for $r=1$  and proposition \ref{supporting5}  finds a decomposition when $r=1$. \\

\noindent Next we find a decomposition for remainder $-(1+\omega)$.  Because $|1+\omega|^{2} = 5$  and $(1,5) = 1$,  we can use proposition \ref{supporting3}  again to suggest that for $x+y\omega \in \mathbb{Z}[\omega]$  there exists $c+d\omega \in \mathbb{Z}[\omega]$  and $r \in \{ -2, -1, 0 , 1, 2 \}$  so that $x+y\omega = (1+\omega)(c+d\omega)+r$.  Let $q_{1+\omega} \left( (c+d\omega), r \right) = (1+\omega)(c+d\omega) + r.$ \\

\noindent We see that $p \left( q_{1+\omega} \left( (c+d\omega), 0 \right), -(1+\omega) \right)$  is not a prime number, except when $c+d\omega = 0$, but this prime number is an element of $\mathcal{E}_{-11}$. \\

\noindent Proposition \ref{supporting4}  tells us that it suffices to find a solution when $r \in \{ 1, 2 \}$  and proposition \ref{supporting5}  finds a decomposition when $r=1$. \\

\noindent Define 

\begin{align*}
s_{1+\omega,2} &\left( q_{1+\omega} \left( (c+d\omega), 2 \right) \right) \\
&= \frac{p \left( q_{1+\omega} \left( (c+d\omega), 2 \right), -(1+\omega) \right) \cdot q_{1+\omega} \left( (c+d\omega),2 \right) - 1}{1+\omega} \\
&= (c+d\omega) \left( 4 \left( (1+\omega)(c+d\omega)+4 \right) - (1 + \omega) \right) +(4 + 3\omega).
\end{align*}

\ 

\noindent If we write 
\begin{align*}
p &= p \left( q_{1+\omega} \left( (c+d\omega), 2 \right), -(1+\omega) \right) \\
q_{1+\omega} &= q_{1+\omega} \left( (c+d\omega), 2 \right) \\  
s_{1+\omega,2} &= s_{1+\omega,2} \left( q_{1+\omega} \left( (c+d\omega), 2 \right) \right)
\end{align*}

\ 

\noindent we see that 
\begin{align*} 
\frac{4}{p} &= \frac{1}{q_{1+\omega}} + \frac{1+\omega}{p \cdot q_{1+\omega}} \\
&= \frac{1}{q_{1+\omega}} + \frac{(1+\omega) \cdot s_{1+\omega,2} }{p \cdot q_{1+\omega} \cdot s_{1+\omega,2}} \\
&= \frac{1}{q_{1+\omega}} + \frac{p \cdot q_{1+\omega} - 1}{p \cdot q_{1+\omega} \cdot s_{1+\omega,2}} \\
&= \frac{1}{q_{1+\omega}} + \frac{1}{s_{1+\omega,2}} - \frac{1}{p \cdot q_{1+\omega} \cdot s_{1+\omega,2}}.
\end{align*}

\ 

\noindent Finally we find a decomposition for $-(1+2\omega)$.  Because $|1+2\omega|^{2}=15$  and $(2,15)=1$, we can use proposition \ref{supporting3}  to suggest that for $x+y\omega \in \mathbb{Z}[\omega]$  there exists $c+d\omega \in \mathbb{Z}[\omega]$  and $r \in \{ -7, -6, -5, -4, -3, -2, -1, 0 , 1, 2, 3, 4, 5, 6, 7 \}$  so that $x+y\omega = (1+2\omega)(c+d\omega) + r$.  Let $q_{1+2\omega} \left( (c+d\omega), r \right) = (1+2\omega)(c+d\omega) + r.$ \\

\noindent Because $(1+2\omega) = -1 \cdot (1-\omega)(2 - \omega)$, $3 = \omega(1-\omega) $  and $5 = (1+\omega)(2-\omega)$  we see that $p \left( q_{1+2\omega} \left( (c+d\omega), 0 \right), -(1+2\omega) \right)$, $p \left( q_{1+2\omega} \left( (c+d\omega), \pm 3 \right), -(1+2\omega) \right)$, \sloppy $p \left( q_{1+2\omega} \left( (c+d\omega), \pm 5 \right), -(1+2\omega) \right)$  and $p \left( q_{1+2\omega} \left( (c+d\omega), \pm 6 \right), -(1+2\omega) \right)$  are not prime numbers. \\

\noindent Proposition \ref{supporting4}  tells us that it suffices to find a solution for $r \in \{ 1, 2, 4, 7 \}$  and proposition \ref{supporting5} finds a decomposition when $r=1$. \\

\noindent Define 

\begin{align*}
s_{1+2\omega,2} &\left( q_{1+2\omega} \left( (c+d\omega), 2 \right) \right) \\
&= \frac{p \left( q_{1+2\omega} \left( (c+d\omega), 2 \right), -(1+2\omega) \right) \cdot q_{1+2\omega} \left( (c+d\omega),2 \right) - 1}{1+2\omega} \\
&= (c+d\omega) \left( 4 \left( (1+2\omega)(c+d\omega)+4 \right) - (1+2\omega) \right) +(1 - 2\omega) 
\end{align*}
\begin{align*}
s_{1+2\omega,4} \left( q_{1+2\omega} \left( (c+d\omega), 4 \right) \right) &= \frac{p \left( q_{1+2\omega} \left( (c+d\omega), 4 \right), -(1+2\omega) \right) - 1}{1+2\omega} \\
&= 4(c+d\omega) +(2-2\omega) 
\end{align*}
\begin{align*}
s_{1+2\omega,7} &\left( q_{1+2\omega} \left( (c+d\omega), 7 \right) \right) \\
&= \frac{p \left( q_{1+2\omega} \left( (c+d\omega), 7 \right), -(1+2\omega) \right) \cdot q_{1+2\omega} \left( (c+d\omega),7 \right) - 1}{1+2\omega} \\
&= (c+d\omega) \left( 4 \left( (1+2\omega)(c+d\omega)+8 \right) - (1 +2\omega) \right) +(35  - 26\omega).
\end{align*}

\ 

\noindent If we write 
\begin{align*}
p &= p \left( q_{1+2\omega} \left( (c+d\omega), 2 \right), -(1+2\omega) \right) \\
q_{1+2\omega} &= q_{1+2\omega} \left( (c+d\omega), 2 \right) \\  
s_{1+2\omega,2} &= s_{1+2\omega,2} \left( q_{1+2\omega} \left( (c+d\omega), 2 \right) \right)
\end{align*}

\ 

\noindent we see that 
\begin{align*} 
\frac{4}{p} &= \frac{1}{q_{1+2\omega}} + \frac{1+2\omega}{p \cdot q_{1+2\omega}} \\
&= \frac{1}{q_{1+2\omega}} + \frac{(1+2\omega) \cdot s_{1+2\omega,2} }{p \cdot q_{1+2\omega} \cdot s_{1+2\omega,2}} \\
&= \frac{1}{q_{1+2\omega}} + \frac{p \cdot q_{1+2\omega} - 1}{p \cdot q_{1+2\omega} \cdot s_{1+2\omega,2}} \\
&= \frac{1}{q_{1+2\omega}} + \frac{1}{s_{1+2\omega,2}} - \frac{1}{p \cdot q_{1+2\omega} \cdot s_{1+2\omega,2}}.
\end{align*}

\ 

\noindent If we write 
\begin{align*}
p &= p \left( q_{1+2\omega} \left( (c+d\omega), 4 \right), -(1+2\omega) \right) \\
q_{1+2\omega} &= q_{1+2\omega} \left( (c+d\omega), 4 \right) \\  
s_{1+2\omega,4} &= s_{1+2\omega,4} \left( q_{1+2\omega} \left( (c+d\omega), 4 \right) \right)
\end{align*}

\ 

\noindent we see that 
\begin{align*} 
\frac{4}{p} &= \frac{1}{q_{1+2\omega}} + \frac{1+2\omega}{p \cdot q_{1+2\omega}} \\
&= \frac{1}{q_{1+2\omega}} + \frac{(1+2\omega) \cdot s_{1+2\omega,4} }{p \cdot q_{1+2\omega} \cdot s_{1+2\omega,4}} \\
&= \frac{1}{q_{1+2\omega}} + \frac{p - 1}{p \cdot q_{1+2\omega} \cdot s_{1+2\omega,4}} \\
&= \frac{1}{q_{1+2\omega}} + \frac{1}{q_{1+2\omega} \cdot s_{1+2\omega,4}} - \frac{1}{p \cdot q_{1+2\omega} \cdot s_{1+2\omega,4}}.
\end{align*}

\ 

\noindent If we write 
\begin{align*}
p &= p \left( q_{1+2\omega} \left( (c+d\omega), 7 \right), -(1+2\omega) \right) \\
q_{1+2\omega} &= q_{1+2\omega} \left( (c+d\omega), 7 \right) \\  
s_{1+2\omega,7} &= s_{1+2\omega,7} \left( q_{1+2\omega} \left( (c+d\omega), 7 \right) \right)
\end{align*}

\ 

\noindent we see that 
\begin{align*} 
\frac{4}{p} &= \frac{1}{q_{1+2\omega}} + \frac{1+2\omega}{p \cdot q_{1+2\omega}} \\
&= \frac{1}{q_{1+2\omega}} + \frac{(1+2\omega) \cdot s_{1+2\omega,7} }{p \cdot q_{1+2\omega} \cdot s_{1+2\omega,7}} \\
&= \frac{1}{q_{1+2\omega}} + \frac{p \cdot q_{1+2\omega} - 1}{p \cdot q_{1+2\omega} \cdot s_{1+2\omega,7}} \\
&= \frac{1}{q_{1+2\omega}} + \frac{1}{s_{1+2\omega,7}} - \frac{1}{p \cdot q_{1+2\omega} \cdot s_{1+2\omega,7}}.
\end{align*}

\ 

\noindent We only mention the following decompositions because the primes of which they are products were not in $\mathcal{E}_{-11}$:

\begin{align*}
\frac{4}{\omega^{2}} &= \frac{1}{-1} + \frac{1}{\omega^{2}} + \frac{1}{\omega} \\
\frac{4}{(1+\omega)^{2}} &= \frac{1}{-1+\omega} + \frac{1}{2} + \frac{1}{12-18\omega} \\
\frac{4}{\omega(1-\omega)} &= \frac{1}{2} + \frac{1}{2} + \frac{1}{3} \\
\frac{4}{\omega(1+\omega)} &= \frac{1}{-1} + \frac{1}{\omega} + \frac{1}{1+\omega} \\
\frac{4}{(1-\omega)(1+\omega)} &= \frac{1}{1} + \frac{1}{-\omega} + \frac{1}{3+3\omega} \\
\frac{4}{(1+\omega)(2-\omega)} &= \frac{1}{2} + \frac{1}{4} + \frac{1}{20}.
\end{align*}
\end{proof}

We can see from figure \ref{Triangle11} that the exceptional set has a different basic shape from the shape of the exceptional sets from the other triangular lattices.  Recall that we also conjectured that there are some unique factorization ring on integers for quadratic fields for which (\ref{esconj}) has a decomposition outside of an exceptional set.  We would also like to conjecture that the exceptional sets will be connected in a discrete topology imposed on these rings.  Answering many of the conjectures in this paper will be the next step in this procedure and we strongly feel that understanding the Erd\H{o}s-Straus conjecture in these new contexts will illuminate the fundamental problems with original conjecture.

\end{document}